\newtheorem{theorem}{Theorem}[section]
\newtheorem{proposition}[theorem]{Proposition}
\theoremstyle{definition}
\newtheorem{definition}[theorem]{Definition}
\theoremstyle{remark}
\newtheorem{remark}[theorem]{Remark}
\newtheorem{example}[theorem]{Example}
\DeclareMathOperator{\Tr}{Tr}
\DeclareMathOperator{\T}{T}
\DeclareMathOperator{\diag}{diag}
\newcommand{\deri}{\,\mathrm{d}}
\title{Mixed control of vibrational systems}
\author{Ivica Naki\'{c}\thanks{University of Zagreb, Department of Mathematics, Croatia} 
\and Zoran Tomljanovi\'{c}\thanks{Department of Mathematics, J. J. Strossmayer University of Osijek, Croatia} 
\and Ninoslav Truhar\footnotemark[2]
}
\date{\vspace{-5ex}}
\begin{document}

\maketitle                   

\begin{abstract}
We consider new performance measures for vibrational systems  based on the $H_2$ norm of linear time invariant systems. New measures will be used as an optimization criterion for the optimal damping of vibrational systems. We consider both theoretical and concrete cases in order to show how new measures stack up against the standard measures. The quality and advantages of new measures as well as the behaviour of optimal damping positions and corresponding damping viscosities are illustrated in numerical experiments.
\end{abstract}






\section{Introduction} 
\label{sec:introduction}
In this paper we are concerned with the minimization of vibrations of an abstract time invariant vibrational system \cite{karl1994control} described by
\begin{equation}
	\label{eq:main}
	G = \left\{
	\begin{aligned}
	& M \ddot{q} + D \dot{q} + K q = B_2 u, \\
	& q(0) = q_0, \; \dot{q}(0) = \dot{q}_0,\\
	& y = \begin{bmatrix}
	C_1 q \\ C_2 \dot{q}
	\end{bmatrix}.
	\end{aligned}
	\right.
\end{equation}
Here $M$ denotes the mass matrix, $D$ denotes the damping matrix and $K$ denotes the stiffness matrix. We assume that $M$ is a non-singular matrix and that matrices $M$, $D$ and $K$ are real matrices of order $n$.
Vector $q\in \mathbb{R}^n$ contains the state variables, while the vector $y\in \mathbb{R}^{r}$ denotes the observed output which together with the output matrices $C_1, C_2 \in \mathbb{R}^{p\times n}$ determine the system displacements and velocities of interest. System disturbances are denoted by the vector $u\in \mathbb{R}^{m}$ and the matrix  $B_2\in \mathbb{R}^{n\times m }$. The first equation in \eqref{eq:main} is usually called the state equation.

Systems of the form \eqref{eq:main} are used as a linearized model for a large class of vibrational systems.
Vibrations are typical and mostly unwanted phenomenon in mechanical systems, since resonance and sustained oscillations can have undesired effects such as energy waste, noise creation and even structural damage. Thus, the minimization of vibrations is a widely studied topic. There is a vast literature in this field of research, particularly in the engineering and applied mathematics. For a brief insight we give just a few references: \cite{beards1996structural, EIRivin2004, ITakewaki2009, genta2009vibration, VES2011, du2016modeling, inman2017vibration}. All these references contain the topic of the  minimization of dangerous vibrations from different aspects and for different problems.

In our setting, all matrices except $D$ are fixed and our main focus is on the damping matrix $D$. The damping matrix $D$ can be modeled in several different ways. Usually it is modeled as a sum of internal and external damping, that is, $D=D_{\mathrm{int}} + D_{\mathrm{ext}}$,  where $D_{\mathrm{int}}$ represents internal damping and the external damping part $D_{\mathrm{ext}}$ depends on positive real parameters $v_i$, $i=1,\ldots,k$ (called viscosities) and corresponding damping positions. This means that external damping encodes damping positions with corresponding damping viscosities.
The internal damping $D_{\mathrm{int}}$ can be modeled in different ways, for example as a small multiple of the critical damping or the proportional damping. Overview of different options for modeling internal damping can be found in \cite{KuzmTomljTruh12}.

We are interested in choosing a damping matrix $D$ in such a way that the vibrations of the system are as small as possible. To be able to pose this problem as an optimization problem, we need to choose an optimization criterion.

In damping optimization setting different optimization criteria are used, depending on different applications. One criterion for systems described by \eqref{eq:main} would be to minimize the $H_2$ norm of the system as the penalty function. In particular, damping optimization using the $H_2$ norm was considered in \cite{Blanchini12, ZuoNay05} as well as in \cite{TomBeatGug18, BennerKuerTomljTruh15}, where the authors considered model order reduction approaches in order to determine the optimal damping parameters efficiently. Also, one can consider damping optimization using the $H_{\infty}$ norm. This can be also calculated efficiently using the model order reduction approaches, for more details see, e.g., \cite{AliBMSV17, BenV13e}. Moreover, damping optimization in mechanical systems with external force was also considered in \cite{VES2011}, \cite{TRUTOMVES2014} and \cite{KTT16} where the authors used optimization criteria based on average energy amplitude and average displacement amplitude over the considered time. For closed systems, without external forces, there exists a multitude of optimization criteria. Overview of such criteria can be found, e.g., in  \cite{VES2011}  or \cite{NAKIC02}. Some of them are based on eigenvalues, such as spectral abscissa criterion (for more details, see, e.g., \cite{FreitasLancaster99}, \cite{TrTomljPuv18}, \cite{NAKTOMTRUH13}, \cite{WJSH2018}, \cite{GMQSW2016}), while other criteria are based on the total energy of the system, such as the total average energy. Total average energy  was considered widely in the last two decades, more details can be found in,  e.g.,  \cite{VES89}, \cite{VES90}, \cite{TRUHVES05}, \cite{TRUHVES09}, \cite{TrTomPuv17}, \cite{Cox:04} and \cite{Nakic13}. The total average energy was considered also in \cite{BennerTomljTruh10} and \cite{BennerTomljTruh11}, where the authors employed  dimension reduction techniques that allowed efficient calculation of the total average energy.

In this paper we will show that for the system given by \eqref{eq:main}, the standard $H_2$ norm may lead to an optimization problem which is not well posed in general. As the main contribution of the paper we propose an alternative criterion, which can be seen as an introduction of a constraint or alternatively as a use of a  mixed norm combining $H_2$ norm of the closed, homogeneous system with initial data and $H_2$ norm of the open, non-homogeneous systems without initial data. Moreover, using the proposed mixed norm as a criterion for the optimization, we show that the problem of global optimization problem in the case when $B_2$, $C_1$ and $C_2$ are full rank matrices has a unique solution which belongs in the class of modal damping matrices. The uniqueness of the solution and its affiliation to the class of modal matrices suggest that the new criterion should be a viable alternative to the standard approaches. We illustrate this also by some numerical examples.

The paper is organized as follows. In Section \ref{sec:h2_vibrational_system} we present a standard criterion which is based on $H_2$ norm of a vibrational system and we show its drawbacks. In Section \ref{sec:homogeneous_} we construct an $H_2$ norm for the homogeneous system while Section \ref{sec:mixed_control} considers new criterion which is based on mixed $H_2$ norms of homogeneous and non-homogeneous vibrational systems. In Section \ref{sec:numerical examples} we compare considered criteria and illustrate advantages of the new criterion as well as the optimal parameters that may arise in different cases.

\section{$H_2$ norm of a vibrational system} 
\label{sec:h2_vibrational_system}
Since the $H_2$ norm of a dynamical system will play the central role in the paper, we start with its definition.
\begin{definition}
\label{defn:h2}
	Suppose that the linear time invariant system $G$ is defined by \eqref{eq:main}.
Then the $H_2$ norm of this system, denoted by $\lVert G \rVert_2 $,
	is defined as
	\begin{equation}
		\lVert G \rVert_2 = \left( \frac{1}{2 \pi} \int_{- \infty}^{\infty} \Tr (\hat G(i \omega)^* \hat G(i \omega)) \, \mathrm{d} \omega\right)^{1/2},
	\end{equation}
	where $\hat G$ is the transfer function of the system $G$, i.e.\ after the application of the Laplace transform to \eqref{eq:main} we arrive at the equation $\hat y (s) = \hat G (s) \hat u(s)$.
\end{definition}
Note that the $H_2$ norm does not depend on the initial data $q_0$, $\dot{q}_0$, hence in the case of the $H_2$ norm we can always assume $q_0=0$, $\dot{q}_0=0$.

By $\lVert \cdot \rVert_2 $ we will denote both the $H_2$ norm of a system and the $L_2$ norm of the function. The precise meaning will always be clear from the context.

To use control--theoretic methods on \eqref{eq:main}, we can either apply Definition \ref{defn:h2} or we can linearise \eqref{eq:main} and then calculate $H_2$ norm of the corresponding linearised system. By \emph{linearization} here we mean writing the system as a first order matrix ODE. In sequel, the term linearization will always have this meaning. Linearised system is then given in the standard form $\dot{x} = A x + Bu, y = Cx$. In the following proposition we show that in both cases we obtain the same expression for the $H_2$ norm. This is a folklore result, we give a short proof for the reader's convenience.
\begin{proposition}
 	\label{pro:h2norm}
Assume that the system \eqref{eq:main} has a finite $H_2$ norm.
Then the square of the $H_2$ norm of the system \eqref{eq:main} is given by
$$\Tr \left( \begin{bmatrix}
C_1^{\ast}C_2 & 0 \\ 0 & C_2^{\ast}C_2
\end{bmatrix} X \right),$$
where $X$ is the solution of the Lyapunov equation
\begin{equation}
	\label{eq:lyap}
	\begin{bmatrix}
0 & I \\ - M^{-1} K & - M^{-1} D
\end{bmatrix} X + X \begin{bmatrix}
0 & I \\ - M^{-1} K & - M^{-1} D
\end{bmatrix}^{\ast} = - \begin{bmatrix}
	0 &0\\
0& M^{-1} B_2B_2^\ast M^{-1}
\end{bmatrix}  .
\end{equation}
 \end{proposition}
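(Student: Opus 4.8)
The plan is to reduce the statement to the classical Gramian formula for the $H_2$ norm of a first order system by linearising \eqref{eq:main}. Introducing the state $x=\begin{bmatrix} q \\ \dot q\end{bmatrix}$, the state equation becomes $\dot x = Ax + Bu$ with
\[
A=\begin{bmatrix} 0 & I \\ -M^{-1}K & -M^{-1}D\end{bmatrix},\qquad
B=\begin{bmatrix} 0 \\ M^{-1}B_2\end{bmatrix},
\]
and the output becomes $y=Cx$ with $C=\begin{bmatrix} C_1 & 0 \\ 0 & C_2\end{bmatrix}$. I would then establish two things: that this first order realisation has the same transfer function as $G$ (so that Definition \ref{defn:h2} returns the same value in both descriptions), and that for the first order system the squared $H_2$ norm equals $\Tr(CPC^\ast)$, where $P$ solves the controllability Lyapunov equation $AP+PA^\ast=-BB^\ast$.

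For the first point I would Laplace transform \eqref{eq:main} with $q_0=\dot q_0=0$ (admissible, since the $H_2$ norm is independent of the initial data) to obtain $(s^2M+sD+K)\hat q = B_2\hat u$, whence
\[
\hat G(s)=\begin{bmatrix} C_1 \\ sC_2\end{bmatrix}(s^2M+sD+K)^{-1}B_2 .
\]
Comparing this with $C(sI-A)^{-1}B$ is a short block computation: writing $(sI-A)^{-1}B=\begin{bmatrix} w_1 \\ w_2\end{bmatrix}$ and reading off the two block rows of $(sI-A)\begin{bmatrix} w_1 \\ w_2\end{bmatrix}=B$ gives $w_2=sw_1$ and $(s^2M+sD+K)w_1=B_2$, so that $C(sI-A)^{-1}B$ reproduces exactly the expression above. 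This identifies the two transfer functions, hence the two norms.

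For the second point I would invoke Plancherel's theorem to move to the time domain. The impulse response of the realisation is $g(t)=Ce^{At}B$ for $t\ge 0$, so
\[
\lVert G\rVert_2^2=\frac{1}{2\pi}\int_{-\infty}^{\infty}\Tr\bigl(\hat G(i\omega)^\ast\hat G(i\omega)\bigr)\deri\omega
=\int_0^\infty\Tr\bigl(g(t)g(t)^\ast\bigr)\deri t=\Tr\bigl(CPC^\ast\bigr),
\]
where $P=\int_0^\infty e^{At}BB^\ast e^{A^\ast t}\deri t$. Integrating $\tfrac{\mathrm d}{\mathrm dt}\bigl(e^{At}BB^\ast e^{A^\ast t}\bigr)$ over $[0,\infty)$ shows that $P$ satisfies $AP+PA^\ast=-BB^\ast$; since $BB^\ast=\begin{bmatrix} 0 & 0 \\ 0 & M^{-1}B_2B_2^\ast M^{-1}\end{bmatrix}$ (using $M=M^\ast$), this is precisely \eqref{eq:lyap} with $X=P$. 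Finally the cyclic property $\Tr(CPC^\ast)=\Tr(C^\ast C\,P)$ together with $C^\ast C=\begin{bmatrix} C_1^\ast C_1 & 0 \\ 0 & C_2^\ast C_2\end{bmatrix}$ yields the claimed trace expression.

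The step I expect to demand the most care is the passage to the time domain and the convergence of the integral defining $P$. The identity $\lVert G\rVert_2^2=\Tr(CPC^\ast)$ with $P$ solving \eqref{eq:lyap} is textbook when $A$ is Hurwitz, whereas here only finiteness of the $H_2$ norm is assumed; this guarantees $g\in L_2(0,\infty)$ but must still be linked to the absolute convergence of $\int_0^\infty e^{At}BB^\ast e^{A^\ast t}\deri t$ and to the solvability of \eqref{eq:lyap}. I would close this gap by observing that a finite $H_2$ norm forces $\hat G$ to be free of poles on the imaginary axis, so that the controllable and observable part of the dynamics is asymptotically stable, which is exactly what makes the Gramian integral converge and fixes $X$ in \eqref{eq:lyap}.
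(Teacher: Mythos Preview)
Your proof is correct and follows essentially the same route as the paper: linearise, verify that the transfer function of the first order realisation coincides with $\hat G$, and then invoke the standard Gramian/Lyapunov formula for the $H_2$ norm (the paper simply cites \cite[Lemma~4.4]{zhou1998essentials} at that point, whereas you spell out the Plancherel/impulse-response derivation). Your closing remark about why a finite $H_2$ norm suffices for the Gramian integral to converge is more careful than the paper's proof itself, which defers this to the subsequent remark asserting that the hypothesis forces $A$ to be stable.
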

\begin{proof}
Applying the Laplace transform to \eqref{eq:main} we obtain
\[
\hat y = \hat G \hat u =
\begin{bmatrix}
C_1 (s^2 M + s D + K)^{-1} \\ C_2 s (s^2 M + s D + K)^{-1}
\end{bmatrix}
 B_2 \hat u.
\] 	
Then
\begin{align*}
\hat y &=
\begin{bmatrix}
 C_1 & 0 \\ 0 & C_2
\end{bmatrix} \begin{bmatrix}
(s^2 + s M^{-1} D + M^{-1} K)^{-1} \\ s (s^2 + s M^{-1} D + M^{-1} K)^{-1}
\end{bmatrix}
M^{-1} B_2 \hat u \\
&=
\begin{bmatrix}
 C_1 & 0 \\ 0 & C_2
\end{bmatrix} \begin{bmatrix}
\ast & (s^2 + s M^{-1} D + M^{-1} K)^{-1} \\ \ast & s (s^2 + s M^{-1} D + M^{-1} K)^{-1}
\end{bmatrix}
\begin{bmatrix}
 0 \\   M^{-1}B_2
\end{bmatrix}\hat u \\
&= \begin{bmatrix}
 C_1 & 0 \\ 0 & C_2
\end{bmatrix} \left (sI - \begin{bmatrix}
0 & I \\ - M^{-1} K & - M^{-1} D
\end{bmatrix}\right )^{-1}\begin{bmatrix}
 0 \\   M^{-1}B_2
\end{bmatrix} \hat u
.
\end{align*}
Hence the system \eqref{eq:main} has the same transfer function as the system
\begin{equation}
\label{eq:linearization0}
\left\{
\begin{aligned}
	\dot{x} &= \begin{bmatrix}
0 & I \\ - M^{-1} K & - M^{-1} D
\end{bmatrix} x + \begin{bmatrix}
 0 \\   M^{-1}B_2
\end{bmatrix}u, \\
	y &= \begin{bmatrix}
 C_1 & 0 \\ 0 & C_2
\end{bmatrix}x.
\end{aligned}
\right. 		
\end{equation}
Since the system \eqref{eq:linearization0} can be obtained from \eqref{eq:main} by the linearization $x_1 = q$, $x_2 = \dot{q}$ and the multiplication to the left of the corresponding state equation with the matrix $\diag(0,M^{-1})$,
 the statement follows from the well--known state--space formula for the $H_2$ norm (see, for example, \cite[Lemma 4.4]{zhou1998essentials}). 	
\end{proof}
\begin{remark}
	Note that by the assumption of Proposition \ref{pro:h2norm},  the system matrix $\left[\begin{smallmatrix}
0 & I \\ - M^{-1} K & - M^{-1} D
\end{smallmatrix}\right]$   is stable and hence the matrix $X$ is symmetric positive semidefinite.
\end{remark}

The choice of different linearizations of \eqref{eq:main} amounts to differnt state transformations of the system \eqref{eq:linearization0}.
Indeed, for regular matrices $T_1$  and $T_2$ let $x_1 = T_1 q$, $x_2 = T_2 \dot{q}$ be a linearization of \eqref{eq:main}. Then one calculates the corresponding matrices
\begin{equation}
 	\label{eq:abc-transf}
 A = \begin{bmatrix}
0 & T_1 T_2^{-1} \\ -T_2 M^{-1} K T_1^{-1} & -T_2 M^{-1} D T_2^{-1}
\end{bmatrix}, \,  C = \begin{bmatrix}
C_1 T_1^{-1} & 0 \\ 0 & C_2 T_2^{-1}
\end{bmatrix}, \,  B = \begin{bmatrix}
	0 \\ T_2 M^{-1} B_2
\end{bmatrix}.
\end{equation}
Now, for system matrices from \eqref{eq:linearization0} we have $ A = T \begin{bmatrix}
0 & I \\ - M^{-1} K & - M^{-1} D
\end{bmatrix} T^{-1}$,  $ B = T \begin{bmatrix}
 0 \\   M^{-1}B_2
\end{bmatrix}$ and $ C = \begin{bmatrix}
 C_1 & 0 \\ 0 & C_2
\end{bmatrix} T^{-1}$, where $T = \mathrm{diag} (T_1, T_2)$. Sometimes we will write $A=A(D)$ and $G=G(D)$ to denote that the matrix $A$ and the system $G$ depend on $D$.  Thus, in general, our linear time invariant system can be written as

\begin{equation}
\label{eq:linearization}
\left\{
\begin{aligned}
	\dot{x} &=  A x +   Bu, \\
	y &=   C x.
\end{aligned}
\right. 		
\end{equation}

Hence, using \cite[Lemma 4.4]{zhou1998essentials}, the $H_2$ norm of the system \eqref{eq:main} can be calculated as
\begin{equation}
	\label{eq:H2_norm}
	\sqrt{\Tr (  C^{\ast} C X )} \text{ where }  A X + X  A^{\ast} = -  B  B^{\ast}.
\end{equation}

In the next proposition we show that the optimization problem $\min_D \lVert G(D) \rVert_2 $ does not in general admit a solution. We treat a special case of a vibrational system without gyroscopic forces and assume that the damping is passive, so no additional energy is introduced into the system. These assumptions imply that $M$ and $K$ are positive definite symmetric matrices and that matrices $D$ are positive semidefinite symmetric matrices. We additionally assume that the matrices $A(D)$ are stable, so that the Lyapunov equation from \eqref{eq:H2_norm} always has a solution.

In the sequel, the relation $X\ge 0$ will be used to denote the fact that the symmetric matrix $X$ is positive semidefinite.

\begin{proposition}
	\label{pro:h2notgood}
Assume that $M$ and $K$ are symmetric positive definite matrices. Assume also that
$C_2 B_2 \ne 0$. 
Then the optimization problem
\[
\min \{\lVert G( D) \rVert_2\colon D \text{ positive semidefinite and } A(D)\text{ a stable matrix}\} 	
\]	
does not have a solution and moreover 
\[ \inf \{\lVert G( D) \rVert_2\colon D \text{ positive semidefinite and } A(D)\text{ a stable matrix}\} = 0. \]
\end{proposition}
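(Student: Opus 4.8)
The plan is to prove the two assertions separately: first that the infimum equals zero, by exhibiting an explicit minimizing sequence of dampings, and then that this value is never attained, so that no minimizer exists. For the infimum I would use heavily overdamped \emph{modal} damping. Since $M$ and $K$ are symmetric positive definite, there is an invertible $\Phi$ with $\Phi^\ast M\Phi = I$ and $\Phi^\ast K\Phi = \Omega^2 = \diag(\omega_1^2,\dots,\omega_n^2)$, $\omega_j>0$. I would take $D_t = tM$, which is positive definite (hence feasible) and for which $A(D_t)$ is stable for every $t>0$, since in modal coordinates each block $\left[\begin{smallmatrix}0&1\\-\omega_j^2&-t\end{smallmatrix}\right]$ is Hurwitz. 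The substitution $q=\Phi z$ decouples \eqref{eq:main} into the scalar oscillators $\ddot z_j + t\dot z_j + \omega_j^2 z_j = (\tilde B_2)_j u$, with $\tilde B_2=\Phi^\ast B_2$ and $\tilde C_i = C_i\Phi$, so that the transfer function factors as
\[
\hat G(s)=\begin{bmatrix} \tilde C_1\,\diag(h_j(s))\,\tilde B_2 \\ \tilde C_2\,\diag(s\,h_j(s))\,\tilde B_2 \end{bmatrix},\qquad h_j(s)=\frac{1}{s^2+ts+\omega_j^2}.
\]

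Next I would estimate $\lVert G(D_t)\rVert_2$ in the frequency domain via Definition \ref{defn:h2}. Using submultiplicativity, $\lVert \tilde C_i\,\diag(g_j)\,\tilde B_2\rVert_F \le \lVert\tilde C_i\rVert_2\,\bigl(\max_j|g_j|\bigr)\,\lVert\tilde B_2\rVert_F$ (here $\lVert\cdot\rVert_2$ denotes the spectral norm), together with $\max_j|g_j|^2\le\sum_j|g_j|^2$, reduces the two diagonal blocks to the scalar integrals $\int_{-\infty}^{\infty}|h_j(i\omega)|^2\deri\omega=\pi/(t\omega_j^2)$ and $\int_{-\infty}^{\infty}|i\omega\,h_j(i\omega)|^2\deri\omega=\pi/t$, applied with $g_j=h_j$ and $g_j=s\,h_j$ respectively. (These are standard, and can also be read off from the $2\times 2$ Lyapunov equation \eqref{eq:lyap}.) Summing over $j$ gives $\lVert G(D_t)\rVert_2^2 = O(1/t)\to 0$, whence the infimum is $0$. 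The step requiring care here is the control of the integral uniformly in $\omega$, which the submultiplicative bound together with the exact scalar integrals handles cleanly, so this part is routine once the modal reduction is in place.

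Finally, for non-attainment I would show $\lVert G(D)\rVert_2>0$ for every feasible $D$. If $\lVert G(D)\rVert_2=0$, then since $A(D)$ is stable $\hat G$ lies in $H_2$ and vanishes for almost every $\omega$; being a rational matrix function, it must vanish identically. In particular the velocity channel $C_2 s(s^2M+sD+K)^{-1}B_2$ would vanish, but its leading term as $s\to\infty$ is $\tfrac1s\,C_2 M^{-1}B_2$, so $\hat G\not\equiv 0$ as soon as this coefficient is nonzero, contradicting $\lVert G(D)\rVert_2=0$. This is exactly where the nondegeneracy hypothesis enters, and it is the delicate point I expect to be the main obstacle: the first Markov parameter of the velocity channel is $C_2 M^{-1}B_2$ rather than $C_2 B_2$ itself, so one must either use the assumption in the form that makes $C_2M^{-1}B_2\neq 0$, or, should that coefficient happen to vanish, fall back on a lower-order Markov parameter (or on the displacement channel, whose value at $s=0$ is $C_1K^{-1}B_2$) to reach the same contradiction. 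Since $0$ is thus a lower bound that is never achieved, the minimum does not exist while the infimum equals $0$.
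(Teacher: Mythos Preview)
Your argument that the infimum equals zero is correct and takes a genuinely different route from the paper. The paper chooses $\widetilde D(\alpha)=\alpha\Omega$ in the modal linearization~\eqref{eq:abc_proper} and shows, via a block analysis of the associated Lyapunov equation (including a case distinction on the growth of $X_{12}(\alpha)$), that the controllability Gramian $X(\alpha)\to 0$. Your choice $D_t=tM$ (equivalently $\widetilde D=tI$) together with direct frequency--domain integration of the decoupled scalar oscillators is more elementary, avoids any case analysis, and produces the explicit rate $\lVert G(D_t)\rVert_2^2=O(1/t)$; the paper's argument, by contrast, has the virtue of staying entirely inside the Lyapunov framework used throughout the article.

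Your reservation about the non--attainment step is well founded, and it is a gap that the paper's own proof shares. There, too, one deduces $CA^kB=0$; for $k=0$ in the linearization~\eqref{eq:linearization0} this yields $C_2M^{-1}B_2=0$, and the paper then asserts ``$C_2B_2=0$'' without further justification. In fact the hypothesis $C_2B_2\ne 0$ is too weak for the stated conclusion. Take $n=2$, $C_1=0$, $C_2=[\,1\ \ 2\,]$, $B_2=[\,1\ \ 0\,]^{\T}$, and
\[
M=D=\begin{bmatrix}1&1\\1&2\end{bmatrix},\qquad K=\begin{bmatrix}3&1\\1&2\end{bmatrix}.
\]
All of $M,K,D$ are positive definite, so $A(D)$ is stable, and $C_2B_2=1\ne 0$; yet the second row of $s^2M+sD+K$ equals $(s^2+s+1)\,[\,1\ \ 2\,]=(s^2+s+1)\,C_2$, whence $C_2(s^2M+sD+K)^{-1}=(s^2+s+1)^{-1}e_2^{\T}$ and $C_2(s^2M+sD+K)^{-1}B_2\equiv 0$. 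Thus $\hat G\equiv 0$, $\lVert G(D)\rVert_2=0$, and the infimum \emph{is} attained, contradicting the proposition as stated. The natural correction you already anticipate---replacing the hypothesis by $C_2M^{-1}B_2\ne 0$---repairs both arguments at once: then the first Markov parameter $CB$ is nonzero, so $\hat G\not\equiv 0$ for every admissible $D$, and your asymptotic computation for the velocity channel (or the paper's $CA^kB$ argument) finishes the proof.
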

\begin{proof}
Since the choice of linearization does not change the $H_2$ norm, we will work with a particulary convenient linearization given in \eqref{eq:abc_proper} below.
Let $M=L_2L_2^*$ and $K=L_1L_1^*$ be Cholesky factorizations of $M$ and $K$ and let $L_2^{-1}L_1 = U_2 \Omega U_1^*$ be a SVD decomposition of $L_2^{-1}L_1$. Note that the diagonal elements of $\Omega = \mathrm{diag}(\omega_1, \ldots, \omega_n)$ are square roots of the eigen--frequencies of the corresponding undamped system (with $D=0$).
We choose $\widetilde T  = \mathrm{diag}(\widetilde T_1, \widetilde T_2) = \mathrm{diag} (U_1^* L_1^*, U_2^* L_2^*)$. Then we obtain for the corresponding system matrices
\begin{equation}
	\label{eq:abc_proper}
\widetilde A = \begin{bmatrix}
0 & \Omega \\ -\Omega & - \widetilde D
\end{bmatrix}, \quad \widetilde C = \begin{bmatrix}
\widetilde C_1 & 0 \\ 0 & \widetilde C_2
\end{bmatrix}, \quad  \widetilde B = \begin{bmatrix}
	0 \\ \widetilde B_2
\end{bmatrix},	
\end{equation}
where $\widetilde D = U_2^* L_2^{-1} D L_2^{-*}U_2$, $\widetilde C_1 = C_1 L_1^{-*}U_1$, $\widetilde C_2 = C_2 L_2^{-*} U_2$ and $\widetilde B_2 = U_2^* L_2^{-1} B_2$.

We will show that for the damping matrices $\widetilde D (\alpha) = \alpha \Omega$ the $H_2$ norm of the corresponding system $G(\alpha):= G(\widetilde D(\alpha))$ tends to zero when $\alpha \to \infty$. First note that $\widetilde A(\widetilde D(\alpha))$ is stable since $\widetilde D(\alpha)$ is positive definite (see, for example, \cite[Corollary 15.7]{VES2011}). Let us partition the solution of the Lyapunov equation $\widetilde A (\widetilde D (\alpha)) X + X \widetilde A (\widetilde D (\alpha)^* = - \widetilde B \widetilde B^*$ as $2\times 2$  block matrix with $n\times n$ entries $X_{ij}(\alpha)$, which depend on $\alpha$. Then we have
\begin{gather}
	\label{eq:lyap_eq_1}
 	\Omega X_{12}(\alpha)^{\ast} + X_{12}(\alpha)\Omega = 0 \\
 	\label{eq:lyap_eq_2}
 	\Omega X_{22}(\alpha) - X_{11}(\alpha)\Omega - \alpha X_{12}(\alpha)\Omega = 0  \\
 	\label{eq:lyap_eq_3}
 	- \Omega X_{12}(\alpha) - \alpha \Omega X_{22}(\alpha) - X_{12}^{\ast}(\alpha)\Omega - \alpha X_{22}(\alpha)\Omega = - \widetilde B \widetilde B^{\ast}.
 \end{gather}
First we assume that $\alpha^{-1} X_{12}(\alpha)\to 0$ as $\alpha\to \infty$. Then \eqref{eq:lyap_eq_3} reads
\[
-  \Omega X_{22}(\alpha) -  X_{22}(\alpha)\Omega = -\frac{1}{\alpha} \widetilde B \widetilde B^{\ast} + \frac{1}{\alpha} \Omega X_{12}(\alpha) + \frac{1}{\alpha} X_{12}^{\ast}(\alpha)\Omega \to 0,
\]
hence $X_{22}(\alpha) \to 0$ as $\alpha\to \infty$. Since $X(\alpha)\ge 0$ it follows $X_{12}(\alpha)\to 0$ as $\alpha\to\infty$. From \eqref{eq:lyap_eq_1} it follows that $\Tr (X_{12}(\alpha) \Omega) = 0$, and from \eqref{eq:lyap_eq_2}
\[
\Tr (X_{11}(\alpha)\Omega) = \Tr(\Omega X_{22}(\alpha)) - \alpha \Tr(X_{12}(\alpha)\Omega) = \Tr(\Omega X_{22}(\alpha))\to 0 \text{ as } \alpha\to \infty.
\]
But this implies $X_{11}(\alpha)\to 0 $, hence $X(\alpha)\to 0 $ as $\alpha\to\infty$. Then $\lVert G(\alpha) \rVert_2 \to 0 $ as $\alpha\to\infty$.

In the other case, when $\alpha^{-1} X_{12}(\alpha)\nrightarrow 0$ as $\alpha\to \infty$, let $k\in \mathbb{N}$ be the smallest integer such that $\alpha^{-k} X_{12}(\alpha)\to 0$ as $\alpha\to \infty$. Such $k$ exists since $X(\alpha)$ is a rational function of $\alpha$, which follows from \eqref{eq:H2_norm}. Since $\alpha^{-k+1} X_{12}(\alpha)\nrightarrow 0$ as $\alpha\to \infty$, the non--negativity of $X(\alpha)$ implies that $\alpha^{-k+1} X_{22}(\alpha) \nrightarrow 0 $ as  $\alpha\to \infty$.
Dividing \eqref{eq:lyap_eq_3} with $\alpha^k$ we obtain
\[
- \frac{1}{\alpha^{k-1}} \Omega X_{22}(\alpha) - \frac{1}{\alpha^{k-1}} X_{22}(\alpha)\Omega = - \frac{1}{\alpha^{k}} \widetilde B \widetilde B^{\ast} + \frac{1}{\alpha^{k}} \Omega X_{12}(\alpha) + \frac{1}{\alpha^{k}} X_{12}^{\ast}(\alpha)\Omega \to 0,
\]
as $\alpha\to \infty$. But this implies $\alpha^{-k+1}X_{22}(\alpha)\to 0$ as $\alpha\to \infty$, a contradiction. To finish the proof, note that $\lVert G(D) \rVert_2 = 0$ if and only if $G(D) = 0$ which implies that $CA^k B = 0$, $k=0,1,\ldots$ (see, for example, \cite[Lemma 2.26]{dullerud2013}). This implies $C_2 B_2 = 0$, a contradiction to our assumption $C_2 B_2 \ne 0$.
\end{proof}
\begin{remark}
	One way to look at Proposition \ref{pro:h2notgood} is to note that
	one can interpret the $H_2$ norm of the system as the square root of the sum of the norms of the system responses to the initial data of the form $x(0)= B e_i$ with zero external force, where $e_i$ denotes the $i$th canonical vector. Indeed, in that case the corresponding system responses are $y_i(t) = C\mathrm{e}^{At}B e_i$, so
	\begin{align*}
		\sum_{k=1}^{m} \lVert y_i \rVert_2^2 &= \sum_{k=1}^{m} \int_0^{\infty}  e_i^* B^* \mathrm{e}^{A^* t} C^* C \mathrm{e}^{At} B e_i \deri t = \sum_{k=1}^m \Tr \left(e_i^* B^* \int_0^{\infty} \mathrm{e}^{A^*t}C^* C^* \mathrm{e}^{At} \deri t B e_i\right)\\
		&= \sum_{k=1}^m \Tr (e_i^* B^* X B e_i) = \Tr (B^* X B) = \lVert G \rVert_2^2 ,
	\end{align*}
	where $X$ solves the Lyapunov equation $A^* X + X A = - C^* C$.
	Hence in our case, the $H_2$ norm only measures the responses to initial data only consisting of velocities, initial displacements do not play  any role.
	This seems to be a general issue when the (first order) control system is obtain by a linearization from the higher order systems.
	\end{remark}
\begin{remark}
The assumption $C_2B_2\ne 0$ could be relaxed, the system \eqref{eq:main} is zero-system under a much stronger condition than $C_2B_2 = 0$. Since the precise statement and its proof are complicated, we will not formulate it here.
\end{remark}


\section{$H_2$ norm of a homogeneous system} 
\label{sec:homogeneous_}
In this section we will generalize the total energy approach for the measurement of unwanted vibrations of a homogeneous vibrational system which is in a way counterpart to the $H_2$ norm of the system given in preceding section.

For the system given by \eqref{eq:main} we take $u=0$ but now the initial conditions $q(0)=q_0$, $\dot{q}(0)=\dot{q}_0$ will play a role. With $e(t;q_0,\dot{q}_0) = \lVert y(t;q_0,\dot{q}_0) \rVert^2$ we denote the energy of the output of the system. We want to average the total energy of the output, given by $\int_0^{\infty} e(t) \, \mathrm{d}t$ over all initial data $(q_0,\dot{q}_0)$ on the corresponding unit sphere.

We calculate, using a linearization given by $T = \mathrm{diag}(T_1,T_2)$,
\begin{equation*}
	e(t) = \left\lVert \begin{bmatrix}
	C_1 q(t) \\ C_2 \dot{q}(t)
	\end{bmatrix}  \right\rVert^2  =
	\left\lVert  C \begin{bmatrix}
	x_1(t) \\ x_2(t)
	\end{bmatrix} \right\rVert^2 =
	\left\lVert  C \mathrm{e}^{ A t} T
	\begin{bmatrix}
	q_0 \\ \dot{q}_0
	\end{bmatrix}
	 \right\rVert^2
	= \begin{bmatrix}
	q_0 \\ \dot{q}_0
	\end{bmatrix}^* T^* \mathrm{e}^{ A^* t}  C^*  C
	\mathrm{e}^{ A t}
	T
	\begin{bmatrix}
	q_0 \\ \dot{q}_0
	\end{bmatrix}.
\end{equation*}
Now
\[
\int_0^{\infty} e(t) \, \mathrm{d}t = \begin{bmatrix}
	q_0 \\ \dot{q}_0
	\end{bmatrix}^* T^* \int_0^{\infty} \mathrm{e}^{ A^* t}  C^*  C
	\mathrm{e}^{ A t} \, \mathrm{d}t T
	\begin{bmatrix}
	q_0 \\ \dot{q}_0
	\end{bmatrix} = \begin{bmatrix}
	q_0 \\ \dot{q}_0
	\end{bmatrix}^* T^* X T
	\begin{bmatrix}
	q_0 \\ \dot{q}_0
	\end{bmatrix},
\]
where $X$ is the solution of the Lyapunov equation
\begin{equation}
\label{eq:lyap_meas}
 A^* X + X  A = -  C^*  C.	
\end{equation}
Multiplying this equation on the left with $T^*$ and on the right with $T$ we obtain
\[
(T^{-1} A T)^* (T^* X T) + (T^* X T) (T^{-1} A T) = - \begin{bmatrix}
C_1^* C_1 & 0 \\ 0 & C_2^* C_2
\end{bmatrix}.
\]
Let $Y = T^* X T$. Then $Y$ is the solution of the Lyapunov equation
\begin{equation*}
	\label{eq:lyap_meas_pom}
	(T^{-1} A T)^* Y + Y (T^{-1} A T)^* = - \begin{bmatrix}
C_1^* C_1 & 0 \\ 0 & C_2^* C_2
\end{bmatrix}.
\end{equation*}
Let us choose $\sigma$, a surface measure on the unit sphere $\mathbb{R}^{2n}$. The $H_2$ norm of the homogeneous system is defined by
\[
\lVert G \rVert_{2, \mathrm{hom}}^2 :=  \int_{\lVert q_0 \rVert^2  + \lVert \dot{q}_0 \rVert^2  = 1 } \int_0^{\infty} e(t;q_0,\dot{q}_0) \, \mathrm{d}
t \, \mathrm{d} \sigma = \int_{\lVert q_0 \rVert^2  + \lVert \dot{q}_0 \rVert^2  = 1 } \begin{bmatrix}
q_0 \\ \dot{q_0}
\end{bmatrix}^* Y \begin{bmatrix}
q_0 \\ \dot{q_0}
\end{bmatrix} \, \mathrm{d} \sigma.
\]
Since
\[
Y \mapsto \int_{\lVert q_0 \rVert^2  + \lVert \dot{q}_0 \rVert^2  = 1 } \begin{bmatrix}
q_0 \\ \dot{q_0}
\end{bmatrix}^* Y \begin{bmatrix}
q_0 \\ \dot{q_0}
\end{bmatrix} \, \mathrm{d} \sigma
\]
is a linear functional on the space of symmetric matrices, by the Riesz representation theorem there exists a symmetric positive semidefinite matrix $\hat Z_\sigma$ (\cite[Proposition 21.1]{VES2011}) such that
\[
\int_{\lVert q_0 \rVert^2  + \lVert \dot{q}_0 \rVert^2  = 1 } \begin{bmatrix}
q_0 \\ \dot{q_0}
\end{bmatrix}^* Y \begin{bmatrix}
q_0 \\ \dot{q_0}
\end{bmatrix} \, \mathrm{d} \sigma = \Tr (Y\hat Z_\sigma) = \Tr (T^* X T \hat Z_\sigma) = \Tr (T \hat Z_\sigma T^* X) .
\]
Note that if we chose the Lebesgue measure for $\sigma$, then $\hat Z_\sigma = \frac{1}{2n}I$. Here by the Lebesgue measure on the unit sphere we mean the surface measure obtained by the Minkowski formula taking Lebesgue measure as the ambient measure, for the comprehensive treatment see \cite{federer1969}. A formula for the matrix $\hat Z_\sigma$ when the measure $\sigma$ is Gaussian is given in \cite{nakic2013integration}.

Let us denote $ Z_\sigma = T \hat Z_\sigma T^*$. Then succinctly the $H_2$ norm of the homogeneous system can be calculated as
$\lVert G \rVert_{2, \mathrm{hom}} = \sqrt{\Tr ( Z_\sigma X)}$, where $X$ solves \eqref{eq:lyap_meas}. In the sequel it will be useful to calculate the same norm by the use of the dual Lyapunov equation:
\begin{equation}
	\label{eq:H2_norm_homogeneous}
	\sqrt{\Tr ( C^*  C Y)} \text{ where }  A Y + Y  A^* = - Z_\sigma.
\end{equation}
Note that \eqref{eq:H2_norm} and \eqref{eq:H2_norm_homogeneous} differ only in the right hand side of the Lyapunov equations, where right hand sides encode the information about dangerous external forces and initial conditions, respectively.


\section{Mixed control of vibrational systems} 
\label{sec:mixed_control}

In this section we will combine the two approaches given in Sections \ref{sec:h2_vibrational_system} and \ref{sec:homogeneous_} to create a norm which takes into account both external forces and initial data and we will show
that using this norm as a criterion for the optimization we can prove that the global minimum exists and is obtained in the class of modal damping matrices.

The issue with \eqref{eq:H2_norm_homogeneous} is that it does not carry any information about the external forces, and the issue with \eqref{eq:H2_norm} is that it does not carry all the needed information about the initial data. A natural choice is to try to combine these two norms by taking their (generalized) quadratic mean. Let $0<p<1$. We define the \emph{$p$--mixed $H_2$ norm} of the system $G$ by $\lVert G \rVert_{2,p}^2 = (1-p)\lVert G \rVert_2^2 + p \lVert G \rVert_{2,\mathrm{hom}}^2$. Using \eqref{eq:H2_norm} and  \eqref{eq:H2_norm_homogeneous}, we see that the $p$--mixed $H_2$ norm can be calculated as
\begin{equation}
	\label{eq:mixed_norm}
	\sqrt{\Tr ( C^*  C X)}, \text{ where }  A X + X  A^* = -p  Z_\sigma - (1-p)  B  B^*.
\end{equation}
Note that \eqref{eq:mixed_norm} does not depend on the choice of the linearization. One can also think of \eqref{eq:mixed_norm} as the standard $H_2$ norm with an additional geometric constraint. Indeed, we can think of the function $G\mapsto p \lVert G \rVert_{2,\mathrm{hom}}^2$ as a barrier function, constraining the set of feasible systems $G$.

Let us now assume that $M$ and $K$ are positive definite symmetric matrices. When modeling vibrational systems, the matrix $B_2$ is usually designed as a band--pass filter where only the dangerous frequencies are passed. In the linearization given in \eqref{eq:abc_proper}, this would mean that $\widetilde B_2 = Z_1 := \mathrm{diag}(I_r,0_{n-r})$, where we assumed that we chose SVD decomposition $L_2^{-1}L_1 = U_2 \Omega U_1^*$ in such a way that the dangerous frequencies of the undamped system are exactly $\omega_1, \ldots, \omega_r$. Hence in that case $B_2 = L_2 U_2 Z_1$.

The measure $\sigma$ typically is chosen in such a way that it attenuates frequencies which are not dangerous. In particular, if the surface measure is chosen in such a way that it corresponds to Lebesgue measure on the subspace spanned by the vectors $[x_i, 0]^{\T}$ and $[0,x_i]^{\T}$, $i=1,\ldots,r$, where $x_i$ are the eigenvectors of the first $r$ undamped eigenfrequencies and on the rest of $\mathbb{R}^{2n}$ it corresponds to the Dirac measure concentrated at zero, then, in the linearization given by \eqref{eq:abc_proper}, we have $\widetilde Z_\sigma = \frac{1}{2n}Z$, where $Z = \mathrm{diag}(Z_1,Z_1)$, $Z_1 = \mathrm{diag}(I_r,0_{n-r})$.

Since the output $y$ usually corresponds to the energy corresponding to the unwanted vibrations, a usual choice for matrices $C_1$ and $C_2$ is $C_1 = \frac{1}{\sqrt{2}} L_1^*$, $C_2 = \frac{1}{\sqrt{2}} L_2^*$. Then $\lVert y(t) \rVert^2 $ equals the energy of the system at the time $t$. This corresponds to $\widetilde C_1 = \frac{1}{\sqrt{2}} U_1$, $\widetilde C_2 = \frac{1}{\sqrt{2}} U_2$ and hence $\widetilde C^* \widetilde C = \frac{1}{2}I $. If we are (as is usually the case) interested only in the energy output corresponding to a part of the system, then  we have $\widetilde C^* \widetilde C = \frac{1}{2}Z $. But in some cases there would be other appropriate choices for $C_1$ and $C_2$. One natural setting is when the matrix $C$ is such that $\frac{\mathrm{d}}{\mathrm{d}t} \lVert y(t) \rVert \le 0$, which is equivalent to the assumption that $C^*CA$ is a stable matrix. Note that this is not automatically satisfied as the following example shows.
\begin{example}
Let
\[
S = \begin{bmatrix}
4 & 0 \\ 0 & 1
\end{bmatrix}, \,
A = \begin{bmatrix}
1 & 1 \\ -4 & -3
\end{bmatrix}.
\]	
Then $A$ is stable, but $SA$ is not.
\end{example}
From the practical point of view, it is important to note that $C$ usually will have low rank.

One typical situation is that $B_2$, $C$ and the measure $\sigma$ are such that
\begin{equation}\label{B2Cmatrices}
    \widetilde B_2 = Z_1 ,\quad  \widetilde C^* \widetilde C = \frac{1}{2}Z\quad  \mbox{and}\quad  \widetilde Z_\sigma = \frac{1}{2n}Z.
\end{equation}
If we drop the constants which are not relevant for the optimization purposes, we obtain
\begin{equation}
	\label{eq:mixed_norm_final}
	\sqrt{Tr (ZX)}, \text{ where } \widetilde A X + X \widetilde A^* = - \left[ \begin{smallmatrix}
		p Z_1 & 0 \\ 0 & Z_1
		\end{smallmatrix} \right].
\end{equation}
The next theorem shows that one can calculate the global minimum in this particular case.
\begin{theorem}
\label{thm:glo-opt}
Assume that $M$ and $K$ are symmetric positive definite matrices. Let $Z_1 = I$ in \eqref{eq:mixed_norm_final}.
Let
\[ \mathcal{D}_s=\{\widetilde D\in \mathbb{R}^{n\times n}: \widetilde D \ge 0 \text{ and the corresponding } \widetilde A \text{ is stable} \}. \]
Then for all $0<p<1$ there exists a unique global minimum of the following optimization problem:
\[
\text{minimize } \sqrt{\Tr (X)} \text{ subject to }\widetilde A X + X \widetilde A^* = - \left[ \begin{smallmatrix}
		p I & 0 \\ 0 & I
		\end{smallmatrix} \right] \text{ and } \widetilde D \in \mathcal{D}_s.		
\]
The minimum is attained at $\widetilde D = \sqrt{\frac{2(1+p)}{p}}\Omega$.
\end{theorem}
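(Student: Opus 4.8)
The plan is to pass to the modal linearization \eqref{eq:abc_proper}, where $\widetilde A = \left[\begin{smallmatrix} 0 & \Omega \\ -\Omega & -\widetilde D\end{smallmatrix}\right]$, and to analyze the block structure of the Lyapunov equation in \eqref{eq:mixed_norm_final}. Writing the symmetric solution $X$ as a $2\times 2$ block matrix with $n\times n$ blocks $X_{11},X_{12},X_{22}$, the equation $\widetilde A X + X\widetilde A^* = -\left[\begin{smallmatrix} pI & 0 \\ 0 & I\end{smallmatrix}\right]$ splits into three coupled matrix relations: a $(1,1)$-block relation $\Omega X_{12}^* + X_{12}\Omega = -pI$, a $(1,2)$-block relation $\Omega X_{22} - X_{11}\Omega - X_{12}\widetilde D = 0$, and a $(2,2)$-block relation $\Omega X_{12} + X_{12}^*\Omega + \widetilde D X_{22} + X_{22}\widetilde D = I$. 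The first observation I would record is that, since $\Omega$ is diagonal, the $(1,1)$-block relation pins down the diagonal of $X_{12}$ as $(X_{12})_{ii}=-p/(2\omega_i)$ independently of the damping, and taking the trace of the $(2,2)$-block relation yields the damping-independent identity $\Tr(\widetilde D X_{22}) = n(1+p)/2$.

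The candidate value and minimizer I would obtain by first solving the modal case. For $\widetilde D=\beta\Omega$ the three block relations decouple into $n$ scalar $2\times 2$ Lyapunov problems, one per frequency $\omega_i$, and by uniqueness of the Lyapunov solution the full $X$ is then block-diagonal. Solving the scalar problem for $\left[\begin{smallmatrix} 0 & \omega \\ -\omega & -\beta\omega\end{smallmatrix}\right]$ gives the per-mode contribution $\tfrac{1}{\omega}\left(\tfrac{1+p}{\beta}+\tfrac{\beta p}{2}\right)$ to the trace, so $\Tr(X)=\left(\tfrac{1+p}{\beta}+\tfrac{\beta p}{2}\right)\Tr(\Omega^{-1})$. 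Minimizing the scalar factor over $\beta>0$ is elementary (AM--GM) and gives $\beta=\sqrt{2(1+p)/p}$ with minimal value $\sqrt{2p(1+p)}\,\Tr(\Omega^{-1})$, which is exactly the claimed minimizer $\widetilde D=\sqrt{2(1+p)/p}\,\Omega$.

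The substantial part of the argument is to prove that no admissible $\widetilde D\in\mathcal D_s$ — in particular no non-modal, non-diagonal damping — can beat this, i.e. the sharp global lower bound $\Tr(X)\ge \sqrt{2p(1+p)}\,\Tr(\Omega^{-1})$. Here I would exploit $X\ge 0$ (Remark following Proposition \ref{pro:h2norm}): combining the Schur complement estimate $\Tr(X_{11})\ge\Tr(X_{12}X_{22}^{-1}X_{12}^*)$ with the damping identity $\Tr(\widetilde D X_{22})=n(1+p)/2$ is the natural route, the mechanism being that shrinking $X_{22}$ (to make the velocity energy small) forces the damping, and through the $(1,2)$-block relation the displacement block $X_{11}$, to grow. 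I expect this global bound to be the main obstacle, because the $(2,2)$-block relation is bilinear in the pair $(\widetilde D,X_{22})$ and the off-diagonal entries of $\widetilde D$ and $X_{12}$ couple the modes, so the per-mode estimates alone lose a factor. A clean way around this is a Lagrangian/certificate argument: take as multiplier the block-diagonal solution $S$ of the dual equation $\widetilde A^* S + S\widetilde A = -I$ associated with the candidate optimum, use the identity $\Tr(X)=\Tr\!\left(S\left[\begin{smallmatrix} pI & 0 \\ 0 & I\end{smallmatrix}\right]\right)$, and verify through the stationarity condition in $\widetilde D$ (the candidate is interior to $\mathcal D_s$, so the positive-semidefiniteness constraint is inactive) that this fixed certificate bounds $\Tr(X)$ for every feasible damping.

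Finally, for uniqueness I would trace the equality conditions: equality in the AM--GM and Schur steps forces the per-mode $2\times 2$ blocks of $X$ into the critically balanced form, the off-diagonal blocks of $\widetilde D$ to vanish, and the per-mode damping to equal $\omega_i\sqrt{2(1+p)/p}$, that is $\widetilde D=\sqrt{2(1+p)/p}\,\Omega$. Coercivity of $\Tr(X)$ as $\widetilde D$ approaches the boundary of $\mathcal D_s$ or grows unbounded then guarantees that the infimum is attained, closing the argument.
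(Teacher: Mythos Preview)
Your modal computation and the two damping-independent trace identities are correct, and they do deliver the candidate minimizer $\widetilde D=\sqrt{2(1+p)/p}\,\Omega$ with value $\sqrt{2p(1+p)}\,\Tr(\Omega^{-1})$. The gap is exactly where you flag it: the global lower bound over \emph{all} $\widetilde D\in\mathcal D_s$. Neither of the two mechanisms you propose is actually carried through. The Schur-complement estimate $\Tr(X_{11})\ge \Tr(X_{12}X_{22}^{-1}X_{12}^*)$ together with $\Tr(\widetilde D X_{22})=n(1+p)/2$ does not couple to the $\omega_i$'s sharply enough, as you yourself note. The ``dual certificate'' idea is more problematic: the identity $\Tr(X)=\Tr\!\big(S\,\mathrm{diag}(pI,I)\big)$ holds only when $S$ solves $\widetilde A^*S+S\widetilde A=-I$ for the \emph{same} $\widetilde D$ that defines $X$; once you freeze $S$ at the candidate optimum and vary $\widetilde D$, the identity fails, and stationarity at an interior point gives only a local extremum. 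The map $\widetilde D\mapsto \Tr(X(\widetilde D))$ is not convex here, so the first-order condition cannot be upgraded to a global bound without additional structure that you have not supplied.

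The paper gets the sharp bound by a different, more hands-on decomposition. Instead of keeping the $2\times 2$ block structure with $n\times n$ blocks, it splits the right-hand side $\mathrm{diag}(pI,I)$ into the $n$ rank-two pieces $Z^i=\mathrm{diag}(pZ_{(i)},Z_{(i)})$ and writes $X=\sum_i X_i$ with $\widetilde A X_i + X_i\widetilde A^*=-Z^i$. For each $X_i$ it uses a finer $4\times 4$ partition that singles out the $i$th coordinate in both the position and velocity blocks. From the resulting ten block equations one extracts an \emph{exact} formula
\[
\Tr X_i \;=\; \frac{1+p-2X_{34}\widetilde D_{12}^*}{\widetilde d_{ii}}\;+\;\frac{p\,\widetilde d_{ii}}{2\omega_i^{2}}\;+\;2\Tr X_{44},
\]
and then the $(4,4)$ and $(2,2)$ block relations, together with $X_i\ge 0$ and $\widetilde D\ge 0$, force $X_{34}\widetilde D_{12}^*\le 0$. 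This yields $\Tr X_i\ge (1+p)/\widetilde d_{ii}+p\,\widetilde d_{ii}/(2\omega_i^2)\ge \sqrt{2p(1+p)}/\omega_i$, and summing over $i$ gives the global bound. Uniqueness then follows by tracing the equality cases through the same block equations, which forces first $X_{44}=0$, then all off-diagonal blocks of $X_i$ to vanish, and finally $\widetilde D_{12}=0$, so $\widetilde D$ is diagonal with the prescribed entries. The key idea you are missing is this per-mode splitting of the right-hand side combined with the $4\times 4$ refinement; it is what turns the bilinear coupling you worried about into a sign condition rather than an estimate that loses a constant.
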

\begin{proof}
To emphasize the dependence of $X$ to the parameter $\widetilde D$ we will sometimes write $X(\widetilde D)$.

Let $\widetilde D \in \mathcal{D}_s$ be arbitrary. From \cite[Corollary 15.7]{VES2011} it follows that $\widetilde D \in \mathcal{D}_s$ if and only if all diagonal entries of $\widetilde D$ are non-zero.
Let $ Z_{(i)}$ be a diagonal matrix with all diagonal entries zero except the $i$-th which is $1$. Set
$Z^i=\left(\begin{smallmatrix}
p  Z_{(i)} & 0 \\ 0 &  Z_{(i)}
\end{smallmatrix}\right)$. Let $X_i$ be the solution of the Lyapunov equation
\begin{equation}
\label{eq:lyap_i}
 \widetilde A (\widetilde D)X+X \widetilde A ( \widetilde D)^{\ast}=-Z^i.
\end{equation}
Then it is easy to see that the solution of the Lyapunov equation
$\widetilde A (\widetilde D)X+X \widetilde A ( \widetilde D)^{\ast}=- \left[ \begin{smallmatrix}
		p I & 0 \\ 0 & I
		\end{smallmatrix}\right]$
is
\begin{equation}
\label{eq:sumxi}
X=\sum_{i=1}^n X_i.
\end{equation}
Our aim is to show
\begin{equation}
\label{eq:mintr}
 \min \{ \Tr(X): X \; \text{solves (\ref{eq:lyap_i})},\; \widetilde D\in D_s\} \ge \frac{\sqrt{2 p (1 + p)}}{\omega_i},
\;\; i=1,\ldots,n.
\end{equation}
Here the right hand side is chosen in such a way that when we sum it for $i=1,\ldots,n$, we obtain the minimal value of the optimization problem.
Observe that by simple permutation argument we can assume
$i=1$.
Let us decompose a matrix $X_1\in\mathbb{R}^{2n\times 2n}$ in the following way:
\begin{equation}
\label{eq:X1}
 X_1=\begin{bmatrix}
x_{11} & X_{12} & x_{13} & X_{14} \\
X_{12}^{\ast} & X_{22} & X_{23} & X_{24} \\
x_{13} & X_{23}^{\ast} & x_{33} & X_{34} \\
X_{14}^{\ast} & X_{24}^{\ast} & X_{34}^{\ast} & X_{44}
\end{bmatrix},
\end{equation}
where $x_{11},x_{33}, x_{13}\in\mathbb{R}$,
$X_{12},X_{14},X_{34}\in\mathbb{R}^{1\times (n-1)}$,
$X_{22},X_{24},X_{44}\in \mathbb{R}^{(n-1)\times(n-1)}$, and
$X_{23}\in\mathbb{R}^{(n-1)\times 1}$. Next we partition the Lyapunov
equation
\[ \widetilde A (\widetilde D)X_1+X_1\widetilde A (\widetilde D)^{\ast}=-Z^1 \]
in the same way as we did the matrix $X_1$. We obtain
\begin{gather}
x_{13}\omega_1 + \omega_1x_{13}^{\ast} + p=0     \tag*{(1,1)}   \\
\omega_1X_{23}^{\ast} + X_{14}\Omega_{n-1} = 0   \tag*{(1,2)}    \\
\omega_1x_{33} - x_{11}\omega_1 - x_{13}\widetilde d_{11} - X_{14}\widetilde D_{12}^{\ast} = 0   \tag*{(1,3)}  \\
\omega_1X_{34} - X_{12}\Omega_{n-1} - x_{13}\widetilde D_{12} - X_{14}\widetilde D_{22} = 0    \tag*{(1,4)}
\end{gather}
\begin{gather}
\Omega_{n-1}X_{24}^{\ast} + X_{24}\Omega_{n-1} = 0    \tag*{(2,2)}   \\
\Omega_{n-1}X_{34}^{\ast} - X_{12}^{\ast}\omega_1 - X_{23}\widetilde d_{11} - X_{24}\widetilde D_{12}^{\ast} = 0   \tag*{(2,3)}   \\
\Omega_{n-1}X_{44} - X_{22}\Omega_{n-1} - X_{23}\widetilde D_{12} - X_{24}\widetilde D_{22} = 0   \tag*{(2,4)}    \\
-\omega_1x_{13} - \widetilde d_{11}x_{33} - \widetilde D_{12}X_{34}^{\ast} - x_{13}^{\ast}\omega_1 - x_{33}\widetilde d_{11} - X_{34}\widetilde D_{12}^{\ast} + 1 = 0 \tag*{(3,3)} \\
-\omega_1 X_{14} - \widetilde d_{11}X_{34} - \widetilde D_{12}X_{44} - X_{23}^{\ast}\Omega_{n-1} - x_{33}\widetilde D_{12} - X_{34}\widetilde D_{22} = 0 \tag*{(3,4)} \\
-\Omega_{n-1}X_{24} - \widetilde D_{12}^{\ast}X_{34} - \widetilde D_{22}X_{44} - X_{24}^{\ast}\Omega_{n-1} - X_{34}^{\ast}\widetilde D_{12} - X_{44}\widetilde D_{22} = 0, \tag*{(4,4)}
\end{gather}
where $\omega_1,\widetilde d_{11}\in \mathbb{R}$, $\widetilde D_{12}\in \mathbb{R}^{1\times (n-1)}$, and
 $\widetilde D_{22}, \Omega_{n-1}\in\mathbb{R}^{(n-1)\times (n-1)}$.

From (1,1) we obtain $x_{13}=-\frac{p}{2\omega_1}$.
Since $\widetilde D\ge 0$, one can easily see that $\widetilde d_{11}=0$ implies $\widetilde D_{12}=0$, hence (3,3) reads $p=-1$, a contradiction. Hence, $\widetilde d_{11}>0$.
From (3,3) we now get
\begin{equation}
\label{eq:trx33}
 x_{33}=\frac{1 + p - 2 X_{34}\widetilde D_{12}^{\ast}}{2\widetilde d_{11}}.
\end{equation}
The relation (4,4), together with the facts $X_{44}\ge 0$, $\widetilde D_{22}\ge 0$,  implies
\[ \Tr(\widetilde D_{12}^{\ast}X_{34}+X_{34}^{\ast}\widetilde D_{12})\le  -\Tr(\Omega_{n-1}X_{24}+X_{24}^{\ast}\Omega_{n-1}), \]
and the relation (2,2) implies $\Tr(X_{24}\Omega_{n-1})=0$, hence we obtain
\begin{equation}
\label{eq:x34c12}
 \Tr(X_{34}^{\ast}\widetilde D_{12})=\Tr(X_{34}\widetilde D_{12}^{\ast})\le 0.
\end{equation}
From the relation (1,3) we obtain
\[ x_{11}=x_{33}-x_{13}\widetilde d_{11}\omega_1^{-1}-\omega_1^{-1}X_{14}\widetilde D_{12}^{\ast}. \]
From relation (2,4) we obtain
\[ X_{22}=\Omega_{n-1}X_{44}\Omega_{n-1}^{-1}-X_{23}\widetilde D_{12}\Omega_{n-1}^{-1}-X_{24}\widetilde D_{22}\Omega_{n-1}^{-1}, \]
hence
\[ \Tr X_{22}= \Tr X_{44}- \Tr(X_{23}\widetilde D_{12}\Omega_{n-1}^{-1})-
\Tr(X_{24}\widetilde D_{22}\Omega_{n-1}^{-1}). \]
From the relation (2,2) we obtain
\[ X_{24}=\frac{1}{2}S\Omega_{n-1}^{-1}, \]
where $S\in\mathbb{R}^{(n-1)\times (n-1)}$ is a skew--symmetric matrix.

Hence
\begin{equation*}
\begin{split}
\Tr X_1&=x_{11}+\Tr X_{22}+x_{33}+\Tr X_{44}\\
&= 2x_{33}+2\Tr X_{44}+\frac{p \widetilde d_{11}}{2\omega_1^2}-\frac{1}{\omega_1}X_{14}\widetilde D_{12}^{\ast}-
\Tr(X_{23}\widetilde D_{12}\Omega_{n-1}^{-1})-\frac{1}{2}\Tr(S\Omega_{n-1}^{-1}\widetilde D_{22}\Omega_{n-1}^{-1})\\
&=2x_{33}+2\Tr X_{44}+\frac{p \widetilde d_{11}}{2\omega_1^2}-\frac{1}{\omega_1}X_{14}\widetilde D_{12}^{\ast}-
\Tr(X_{23}\widetilde D_{12}\Omega_{n-1}^{-1}).
\end{split}
\end{equation*}
From the relation (1,2) follows $X_{23}=-\frac{1}{\omega_1}\Omega_{n-1}X_{14}^{\ast}$,
hence
\[ \Tr X_1=2x_{33}+2\Tr X_{44}+\frac{p \widetilde d_{11}}{2\omega_1^2}. \]
Now (\ref{eq:trx33}) and (\ref{eq:x34c12}) imply
\begin{equation}
\label{eq:nejedx}
\Tr X_1= \frac{1 + p - 2X_{34}\widetilde D_{12}^{\ast}}{\widetilde d_{11}}+\frac{p \widetilde d_{11}}{2\omega_1^2}+2\Tr X_{44}
\ge \frac{1 + p}{\widetilde d_{11}}+\frac{p \widetilde d_{11}}{2\omega_1^2}\ge \frac{\sqrt{2 p (1 + p)}}{\omega_1}.
\end{equation}
The last inequality follows from the following observation.
Let us define the function $g(x)=\frac{1+p}{x}+\frac{p x}{2\omega_1^2}$. Then the function $g$ attains its unique minimum
$\frac{\sqrt{2 p (1 + p)}}{\omega_1}$ in $x=\sqrt{\frac{2(1+p)}{p}}\omega_1$.

Hence, we have shown (\ref{eq:mintr}). Now (\ref{eq:sumxi}) implies
\[ \Tr(X(\widetilde D))\ge \sqrt{2 p (1 + p)}\sum_{i=1}^n \omega_i^{-1}. \]
Since
\[ \Tr\left(X\left(\sqrt{\frac{2(1+p)}{p}}\Omega\right)\right)=\sqrt{2 p (1 + p)}\sum_{i=1}^n \omega_i^{-1},\]
this is indeed the global minimum.

Assume that $\widetilde D\in D_s$ is such that
$\Tr(X(\widetilde D))=\sqrt{2 p (1 + p)}\sum_{i=1}^n \omega_i^{-1}$. Then
(\ref{eq:nejedx}) and (\ref{eq:sumxi}) imply $\Tr
X_1=\sqrt{2 p (1 + p)}\omega_1^{-1}$. Observe that the matrix $X_1$ is decomposed as in (\ref{eq:X1}). Then (\ref{eq:nejedx}) implies
$X_{44}=0$. Since $X_1\ge 0$, it follows $X_{14}=X_{24}=X_{34}=0$.
From the relation (1,2) it follows $X_{23}=0$, from relation (2,4) it follows $X_{22}=0$, and from relation (2,3) it follows $X_{12}=0$. Finally, from (1,4) now it follows $\widetilde D_{12}=0$.

By repeating this procedure for $i=2,\ldots,n$ we obtain that $\widetilde D$ is a diagonal matrix. From \eqref{eq:nejedx}
it follows that $\widetilde D = \alpha \Omega^{-1}$ for some $\alpha$ and then it is easy to check that $\alpha = \sqrt{2(1+p)}/\sqrt{p}$.
\end{proof}
\begin{remark}
	Theorem \ref{thm:glo-opt} shows that the $p$--mixed $H_2$ norm should be well suited for the use in damping optimization problems for vibrational systems, at least in the case when there are no gyroscopic forces. The unique global minimizer is obtained in the class of the modal matrices, which is a very natural result.
\end{remark}

\section{Numerical experiments}
\label{sec:numerical examples}

In this section, we consider numerical examples in order to illustrate the quality and advantages of new performance measures as well as the comparison with standard performance measures. In these examples the corresponding Lyapunov equation was solved by Matlab's  function \verb+lyap+.

\begin{example}\label{ex1}
{\em We consider an $n$-mass oscillator  or an oscillator ladder given by \ref{n-massOscillator}. The oscillator describes the mechanical system of $n$ masses and $n+1$ springs with
two grounded dampers. Similar models were considered e.g. in  \cite{BennerTomljTruh10}, \cite{BennerTomljTruh11}, \cite{TrTomljPuv18} and \cite{VES2011}.
For this  mechanical system, the mathematical model is given by
(\ref{eq:main}), where the mass and stiffness matrices are defined by
\begin{align*}
M&=\diag(m_1,m_2,\ldots,m_n),\\
K&=\left(%
\begin{array}{ccccc}
  k_1+k_2 & -k_2 &  &  &  \\
  -k_2    & k_2+k_3& -k_3 &  &  \\
          & \ddots   & \ddots & \ddots &  \\
          &      & -k_{n-1} & k_{n-1}+k_{n} & -k_n   \\
          &      &  & -k_n & k_n+k_{n+1} \\
\end{array}%
\right).
\end{align*}
Coefficients of mass and stiffness matrices are given as
\begin{align*}
 n&=100;  \\
    k_i&=100, \quad\forall i;
  & m_i= \left\{ \begin{array}{ll} 200- 2i, \quad & i=1,\ldots, 50, \\
   i+50 , \quad & i=51,\ldots,100.
                 \end{array}\right.
\end{align*}

\begin{figure}[h!]
\centering
\includegraphics[width=10cm]{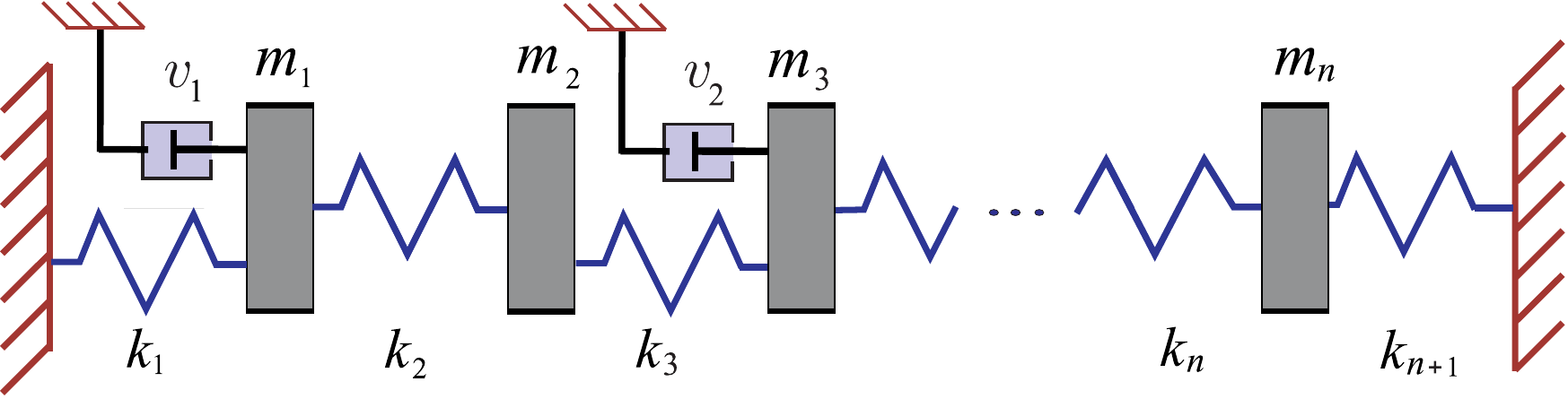}
 \caption{The $n$-mass oscillator with two grounded dampers}
 \label{n-massOscillator}
\end{figure}

The damping matrix is given as
$ D =D_{\mathrm{int}} + D_{\mathrm{ext}},$  where the  internal damping $D_{int}$ is small multiple of the critical damping, $D_{\mathrm{int}} =0.04 \cdot M^{1/2}\sqrt{M^{-1/2}KM^{-1/2}}M^{1/2}$. Here $\sqrt{X}$ denotes the unique symmetric positive definite matrix such that $(\sqrt{X})^2 = X$.

We  consider two grounded dampers with the positions $i$ and $j$ that have the viscosities $v_1,v_2\geq 0 $, respectively. That is,
the external damping is defined by $D_{ext}=v_1 e_{i}e_{i}^T+v_2 e_{j}e_{j}^T$, where $1\leq i<j\leq n$.  We are interested in the damping of all undamped eigenfrequencies.

In order to compare optimal positions when considering different performance measures, we will calculate the optimal viscosities for all possible damping configurations where feasible interval for  each viscosity will be $[\,0,\,5000 \,]$. The viscosities were optimized by Matlab's function \verb+fminsearchbnd+ and they are rounded to two digits.

Regarding the input and  the output matrices we consider two cases. The first case considers one particular choice of the input and output matrices, while the second case considers matrices given by \eqref{B2Cmatrices}. In both cases we will calculate new performance measure using the linearization given by \eqref{eq:abc_proper}.

Let us consider the first case where for the penalty function we will use \emph{$p$--mixed $H_2$ norm} of the system defined by \eqref{eq:mixed_norm}.

Here the input matrix $B_2\in \mathbb{R}^{n\times 5}$ from \eqref{eq:main} is such that
 \begin{align*}
  B_2(1:5,1:5) &=\diag(5,4,3,2,1),
 \end{align*}
while all the other entries are equal to zero. This means that the input is applied to the first 5 masses on the left-hand side of the considered $n$-mass oscillator, hence the masses on the left-hand side od the oscillator have a larger influence on the input.

Moreover, we are interested in the 10 displacements and velocities in the middle of the $n$-mass oscillator. This means that the matrices $C_1,C_2 \in \mathbb{R}^{10\times n}$ from \eqref{eq:main} are defined by
 \begin{align*}
  C_1(1:10,46:55)&=C_2(1:10,46:55)=I,
 \end{align*}
with all other entries being equal to zero.

First, in Figure \ref{comp-Fval-diffp} we illustrate the behaviour of the optimal parameters and the magnitude of the
\emph{$p$--mixed $H_2$ norm} of the system defined by \eqref{eq:mixed_norm}. In more details, for given damping positions $i,j\in\{1,2,\ldots,n\}$ with $i<j$, we have optimized the viscosities and the optimal \emph{$p$--mixed $H_2$ norm} is plotted.  Figure \ref{comp-Fval-diffp} presents four different subplots that correspond to parameters $p=0,\frac{1}{3},\frac{2}{3},1$.  Here, for each damping configuration $(i,j)$ where $i<j$, we have optimized the viscosities and the optimal viscosities are denoted by $(v_1^0, v_2^0)$ while the optimal position for given $p$ is denoted by  $(i^0,j^0)$. In this figure we can also see the magnitudes of optimal damping parameters for considered parameters $p$.

\begin{figure}[h!]
\centering
\includegraphics[width=16cm]{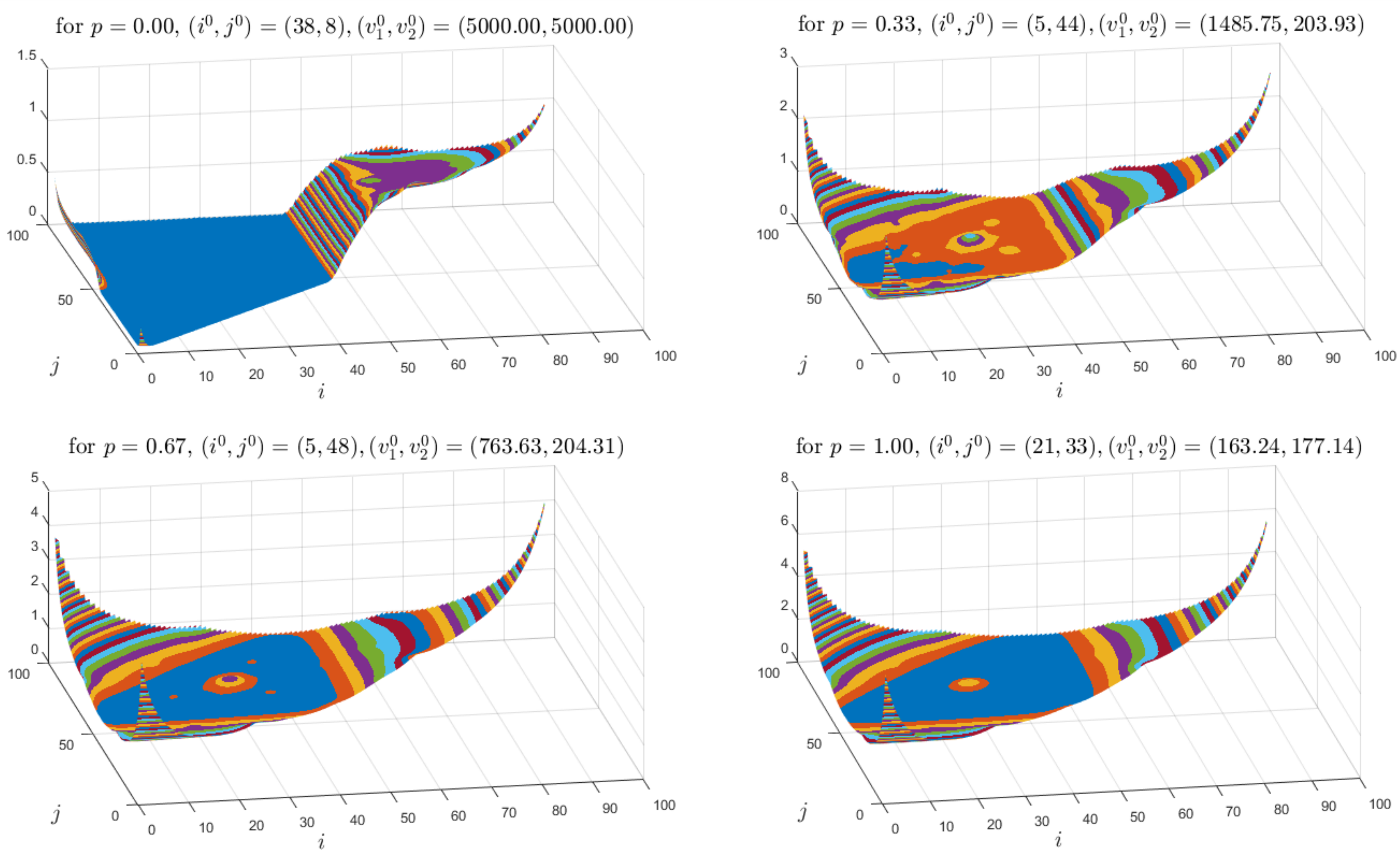}
 \caption{Comparison of the optimal \emph{$p$--mixed $H_2$ norm}  (given by \eqref{eq:mixed_norm})  for different parameters $p$}
 \label{comp-Fval-diffp}
\end{figure}

Figure \ref{comp-x1-diffp} shows similar results, but here on the $z$ axis instead of the \emph{$p$--mixed $H_2$ norm} we present the optimal first viscosity. This means that for each damping configuration $(i,j)$ where $i<j$, we have optimized viscosities $v_1$ and $v_2$ and the optimal first viscosity ($v_1^0$) is plotted. Similarly we can plot the optimal second viscosity, but surface plots illustrate a similar behaviour.

\begin{figure}[h!]
\centering
\includegraphics[width=16cm]{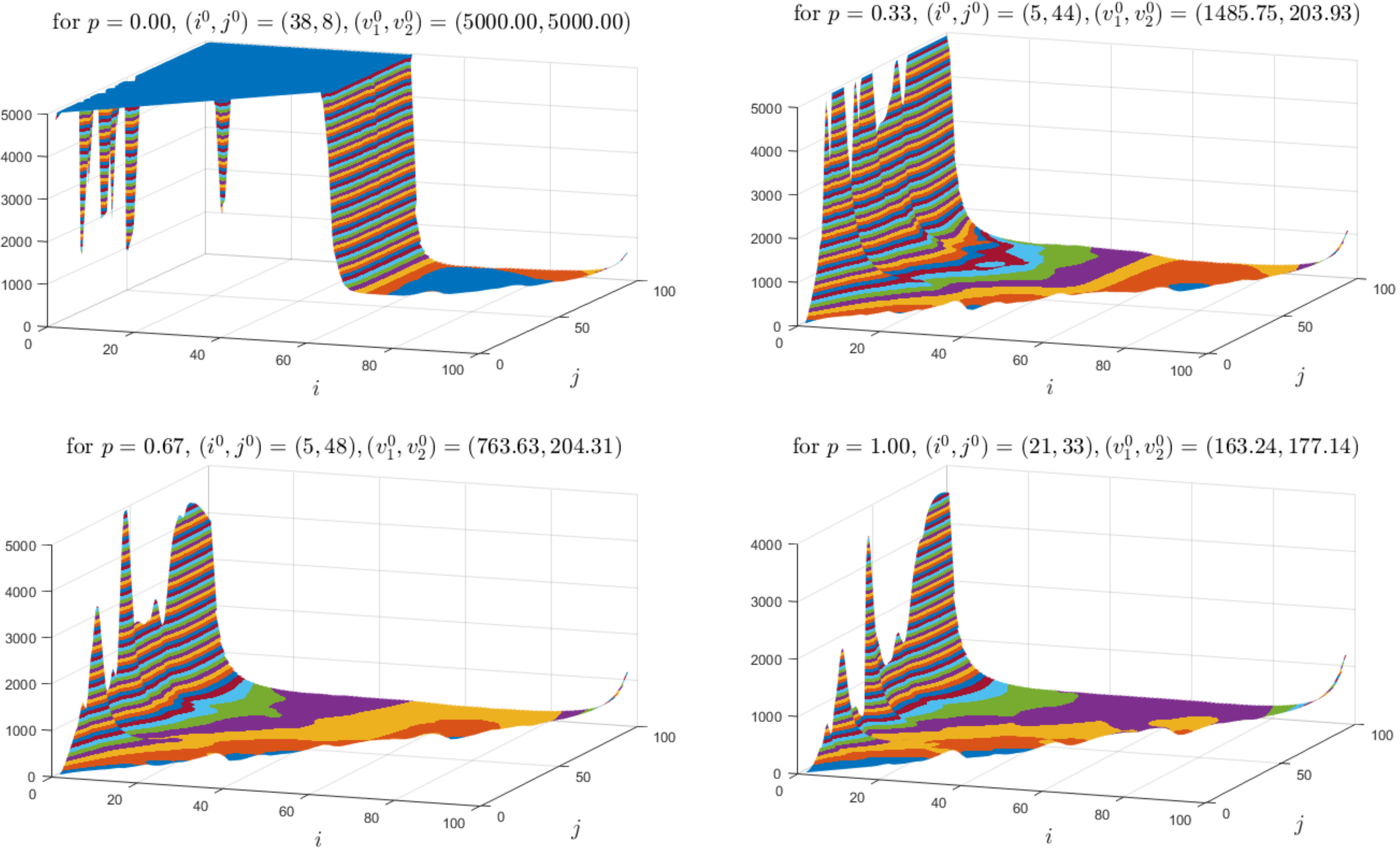}
 \caption{Comparison of the optimal first viscosity $v_1$ for different parameters $p$}
 \label{comp-x1-diffp}
\end{figure}

From  Figures \ref{comp-Fval-diffp} and \ref{comp-x1-diffp} we can conclude that the parameter $p$ has a very strong influence on optimal damping parameters. First, we can see that optimal damping positions vary significantly, while we change the parameter $p$ and the difference in optimal viscosities is even bigger. Moreover, for $p =0$ from Figure \ref{comp-x1-diffp} (which corresponds to the standard $H_2$ norm), we can see that there exists a whole area where the optimal first viscosity was equal to $5000$ (the right hand side of the feasible segment), which means that the optimal viscosity  should be as large as possible.

Let us now consider the second case where we consider the new performance measure given by \eqref{eq:mixed_norm_final}.

Now, we will consider the same example, but thew optimization criterion will be based on matrices given by \eqref{B2Cmatrices}. This means that we will consider the criterion given by \eqref{eq:mixed_norm_final}.

The same optimization process was performed as in the first case. First  of all, we should emphasize that we did not obtain a significant difference in optimal damping parameters. In particular, for all parameters $p=0,\frac{1}{3},\frac{2}{3},1$ the optimal configuration of damping positions was equal to $(27,53)$. On the other hand, we observe a slight change in optimal parameters, that is, optimal viscosities are (234.57,222.08), (229.05,217.41), (225.99,214.72) and (224.01,213.06) for $p=0,\frac{1}{3},\frac{2}{3},1$, respectively.

\begin{figure}[h!]
\centering
\includegraphics[width=16cm]{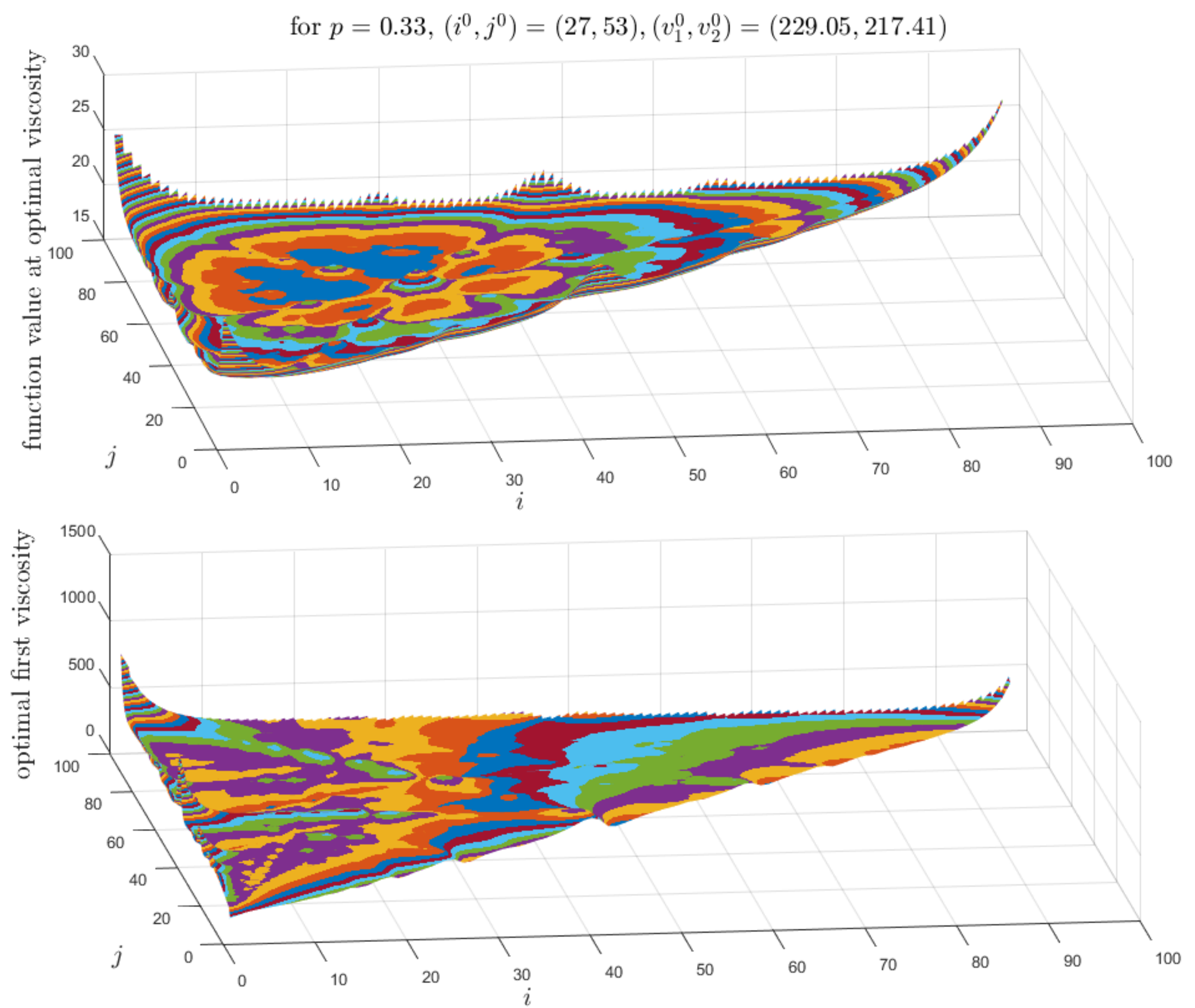}
 \caption{Comparison of the optimal \emph{$p$--mixed $H_2$ norm}  (given by \eqref{eq:mixed_norm_final}) and the optimal first viscosity $v_1$ for parameter $p=\frac{1}{3}$}
 \label{comp-x1Fval-diffp}
\end{figure}

In this case there isn't a significant difference in the surface plot for this penalty function. Thus, on Figure \ref{comp-x1Fval-diffp} we present surface subplots obtained by using the parameter $p=\frac{1}{3}$. On the first subplot we  present the surface plot for the new performance measure given by \eqref{eq:mixed_norm_final}, while on the second subplot we show magnitudes of the optimal first viscosity.


In this case our performance measure given by \eqref{eq:mixed_norm_final} is not significantly influenced by a change of the parameter $p$. Hence in this example the optimal damping parameters do not depend on the particular choice of the performance measure as long as $p>0$. This shows that for some vibrational systems the choice of the parameter $p$ is not important.
}
\end{example}

\begin{example}\label{ex2}
{\em In the second example we will consider a five-story shear frame structure from \cite{Xu:04} which is shown on Figure \ref{frame}. Similarly to the first example, in this example  the structure  can be modeled as a lumped mass system with equation (\ref{eq:main})
where  $$M=\diag(m_1,m_2,\ldots,m_5),\ K=\left[\begin{array}{ccccc}k_1+k_2&-k_2&0&0&0\\-k_2&k_2+k_3&-k_3&0&0\\0&-k_3&k_3+k_4&-k_4&0\\0&0&-k_4&k_4+k_5&-k_5\\0&0&0&-k_5&k_5\end{array}\right].$$

\begin{figure}[ht]
\centering
\begin{picture}(60,90)
\put(0,0){\scalebox{0.2}{\includegraphics{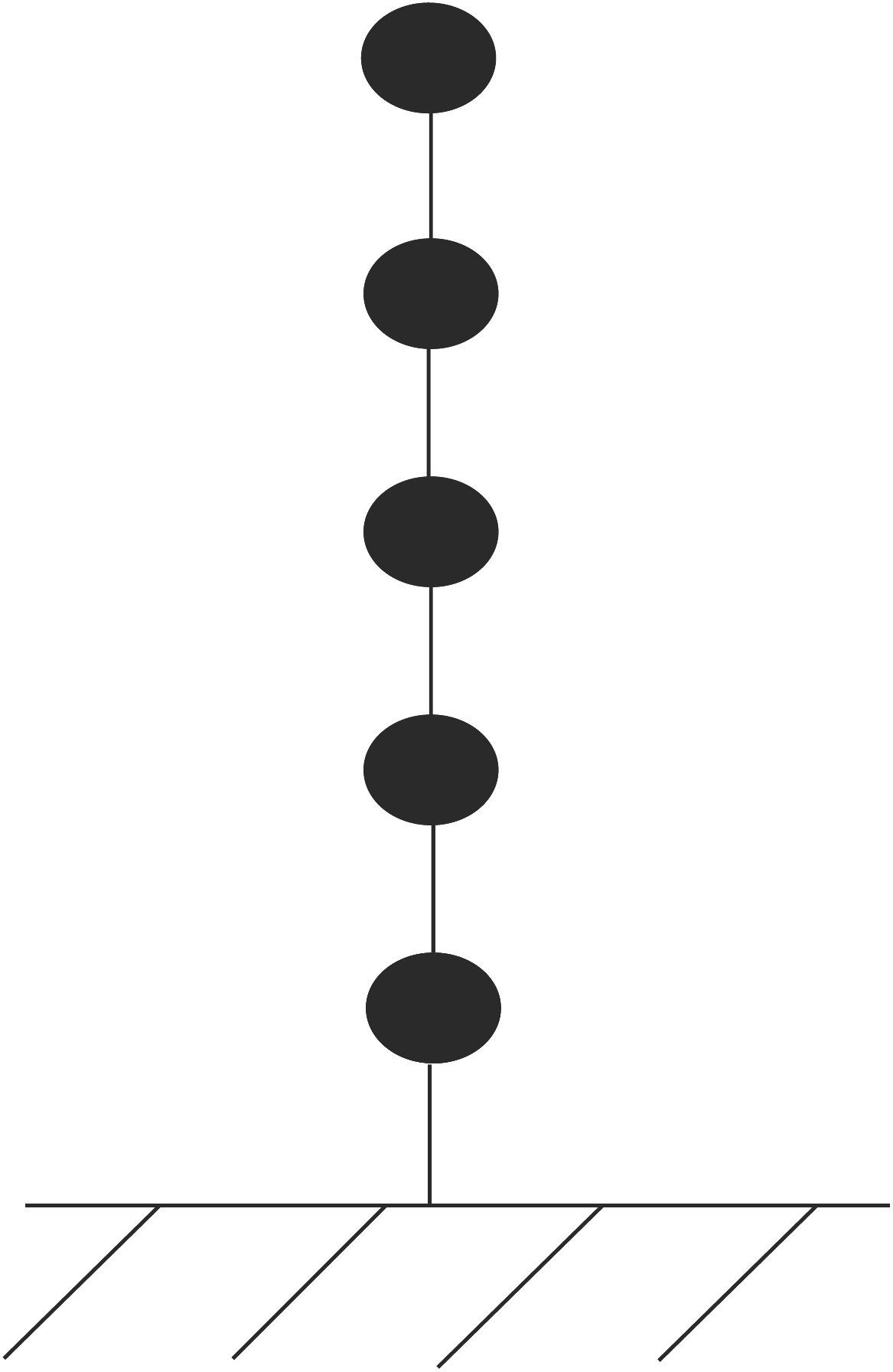}}}
\put(39,25){$m_1$}
\put(23,34){$k_2$}
\put(39,43){$m_2$}
\put(23,52){$k_3$}
\put(39,61){$m_3$}
\put(23,90){$k_5$}
\put(39,80){$m_4$}
\put(23,70){$k_4$}
\put(39,100){$m_5$}
\put(23,15){$k_1$}
\end{picture}
\vspace*{-0.5cm}
\caption{ A five-story frame structure }\label{frame}
\end{figure}

Configuration of masses and stiffnesses in this example corresponds to a structure from  \cite{Xu:04}  with mass and stiffness parameters given in Table 1.
\begin{table}[h!]\begin{center}
\begin{tabular}{|l|l|l|l|l|l|}
\hline
$i$th index&1&2&3&4&5\\\hline
$m_i$&4000&3000&2000&1000&800\\\hline
$k_i$&$3.375\times 10^6$&$3.75\times 10^6$&$3.375\times 10^6$&$3\times 10^6$&$2.25\times 10^6$\\\hline
\end{tabular}
\caption{ Mass and stiffness parameters of the structure} \end{center}
\end{table}

The damping matrix is modeled as
$ D =D_{\mathrm{int}} + D_{\mathrm{ext}},$  where the  internal damping $D_{int}$ is again a  small multiple of the critical damping as it was in the first example. In this example   we consider a damper that connects second and third floor of the five-story shear frame structure, therefore
external damping   is defined by $D_{ext}=v  (e_{2}-e_{3})(e_{2}-e_{3})^T$, where the parameter $v$ represents the viscosity parameter.

Similarly to the previous example, we are interested in the damping of all undamped eigenfrequencies.

Here the input matrix from  \eqref{eq:main} is determined by the matrix $B_2=\left(%
\begin{array}{ccccc}
   5000 & 0 & 0 & 0 &0
\end{array}%
\right)^T$  which means that the input is applied on the mass that is closest to the ground. This is natural since that this corresponds to disturbance that comes from ground. On the other hand, we will be interested in the stabilization of oscillations at the highest floor and therefore   we are interested in the   displacement  and velocity   of the highest mass, hence we take
 \begin{align*}
  C_1 &=C_2 =\left(%
\begin{array}{ccccc}
   0 & 0 & 0 & 0 &100
\end{array}%
\right).
 \end{align*}

In Figure \ref{opt gainsEx2} we show results for the new performance measure   where the penalty function corresponds to the \emph{$p$--mixed $H_2$ norm} of the system defined by \eqref{eq:mixed_norm}. The parameter $p$ varies from $0$ to $1$ and the first subplot shows the magnitude of the optimal viscosity parameter, while the second subplot shows the magnitude of the \emph{$p$--mixed $H_2$ norm} at the optimal viscosity. We can see from this figure the behaviour of the optimal value and the optimal function value. In particular, the optimal viscosity varies from  $1.09\cdot 10^5$  to $1.44\cdot 10^5$ which means that the parameter $p$ has significant influence on the magnitude of optimal viscosities.

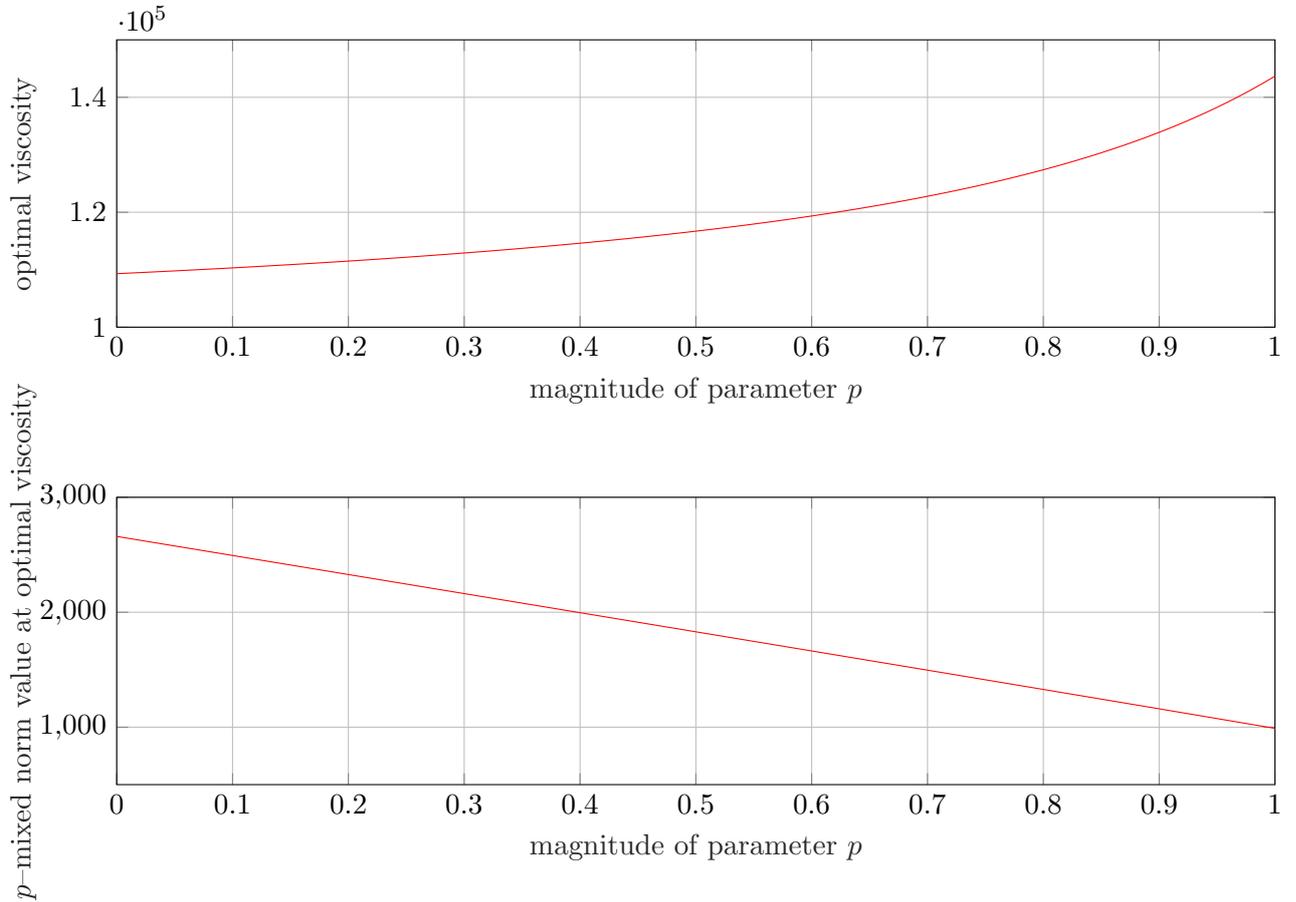
\begin{figure}[h!]
\centering
 \begin{tikzpicture}

\begin{axis}[%
width=6in,
height=1.5in,
at={(1.15in,3.588in)},
scale only axis,
xmin=0,
xmax=1,
xlabel style={font=\color{white!15!black}},
xlabel={magnitude of parameter $p$},
ymin=100000,
ymax=150000,
ylabel style={font=\color{white!15!black}},
ylabel={optimal viscosity},
axis background/.style={fill=white},
xmajorgrids,
ymajorgrids
]
\addplot [color=red, forget plot]
  table[row sep=crcr]{%
0	109308.106221657\\
0.01	109402.145638297\\
0.02	109497.578479054\\
0.03	109594.481891841\\
0.04	109692.851050877\\
0.05	109792.728423428\\
0.06	109894.15401294\\
0.07	109997.170548372\\
0.08	110101.798563116\\
0.09	110208.099728759\\
0.1	110316.098127123\\
0.11	110425.816601492\\
0.12	110537.3350087\\
0.13	110650.673434189\\
0.14	110765.866647628\\
0.15	110882.990200161\\
0.16	111002.079834108\\
0.17	111123.168813881\\
0.18	111246.328349108\\
0.19	111371.615480722\\
0.2	111499.053435808\\
0.21	111628.73054777\\
0.22	111760.683330203\\
0.23	111894.987318861\\
0.24	112031.709936275\\
0.25	112170.884273715\\
0.26	112312.6125002\\
0.27	112456.93925442\\
0.28	112603.963696741\\
0.29	112753.727148988\\
0.3	112906.339022636\\
0.31	113061.839862672\\
0.32	113220.353628119\\
0.33	113381.94736057\\
0.34	113546.715761311\\
0.35	113714.746088491\\
0.36	113886.142034595\\
0.37	114061.003069035\\
0.38	114239.419761214\\
0.39	114421.522959734\\
0.4	114607.405505777\\
0.41	114797.212912724\\
0.42	114991.045066954\\
0.43	115189.032474284\\
0.44	115391.319030292\\
0.45	115598.028927127\\
0.46	115809.301971041\\
0.47	116025.32437313\\
0.48	116246.210884609\\
0.49	116472.163615848\\
0.5	116703.337430556\\
0.51	116939.905797804\\
0.52	117182.061004189\\
0.53	117430.014343157\\
0.54	117683.956146624\\
0.55	117944.103148304\\
0.56	118210.698433734\\
0.57	118483.95393749\\
0.58	118764.142608093\\
0.59	119051.508343881\\
0.6	119346.328860569\\
0.61	119648.915978482\\
0.62	119959.512550896\\
0.63	120278.514351062\\
0.64	120606.210699289\\
0.65	120942.945280289\\
0.66	121289.126272581\\
0.67	121645.123617467\\
0.68	122011.347387022\\
0.69	122388.25935426\\
0.7	122776.28014549\\
0.71	123175.934304159\\
0.72	123587.747090335\\
0.73	124012.195691176\\
0.74	124449.948056481\\
0.75	124901.536299536\\
0.76	125367.680464915\\
0.77	125849.033043466\\
0.78	126346.333318826\\
0.79	126860.392924578\\
0.8	127391.989197424\\
0.81	127942.045047743\\
0.82	128511.499362341\\
0.83	129101.374197363\\
0.84	129712.70610456\\
0.85	130346.695436746\\
0.86	131004.528246337\\
0.87	131687.577650892\\
0.88	132397.222513392\\
0.89	133134.999252789\\
0.9	133902.548199762\\
0.91	134701.610971867\\
0.92	135534.103189571\\
0.93	136402.068536385\\
0.94	137307.703244199\\
0.95	138253.422095376\\
0.96	139241.766690073\\
0.97	140275.586657073\\
0.98	141357.909026034\\
0.99	142492.037863274\\
1	143681.587752869\\
};
\end{axis}

\begin{axis}[%
width=6in,
height=1.5in,
at={(1.15in,1.2in)},
scale only axis,
xmin=0,
xmax=1,
xlabel style={font=\color{white!15!black}},
xlabel={magnitude of parameter  $p$},
ymin=500,
ymax=3000,
ylabel style={font=\color{white!15!black}},
ylabel={$p$--mixed norm value at optimal viscosity},
axis background/.style={fill=white},
xmajorgrids,
ymajorgrids
]
\addplot [color=red, forget plot]
  table[row sep=crcr]{%
0	2659.61144643095\\
0.01	2643.04315157305\\
0.02	2626.47401453936\\
0.03	2609.90401732258\\
0.04	2593.33314140105\\
0.05	2576.76136772034\\
0.06	2560.18867667411\\
0.07	2543.61504808403\\
0.08	2527.04046117885\\
0.09	2510.46489457273\\
0.1	2493.88832624234\\
0.11	2477.31073350315\\
0.12	2460.73209298449\\
0.13	2444.15238060366\\
0.14	2427.57157153869\\
0.15	2410.98964020005\\
0.16	2394.40656020079\\
0.17	2377.82230432556\\
0.18	2361.23684449797\\
0.19	2344.65015174659\\
0.2	2328.0621961691\\
0.21	2311.47294689504\\
0.22	2294.88237204643\\
0.23	2278.29043869669\\
0.24	2261.6971128275\\
0.25	2245.10235928343\\
0.26	2228.5061417245\\
0.27	2211.90842257614\\
0.28	2195.30916297679\\
0.29	2178.70832272273\\
0.3	2162.10586021012\\
0.31	2145.50173237398\\
0.32	2128.89589462409\\
0.33	2112.28830077735\\
0.34	2095.67890298669\\
0.35	2079.0676516661\\
0.36	2062.45449541159\\
0.37	2045.83938091794\\
0.38	2029.22225289073\\
0.39	2012.60305395359\\
0.4	1995.98172455027\\
0.41	1979.35820284109\\
0.42	1962.73242459364\\
0.43	1946.10432306703\\
0.44	1929.47382888958\\
0.45	1912.84086992929\\
0.46	1896.20537115667\\
0.47	1879.56725449939\\
0.48	1862.92643868826\\
0.49	1846.28283909377\\
0.5	1829.63636755274\\
0.51	1812.9869321841\\
0.52	1796.33443719339\\
0.53	1779.67878266468\\
0.54	1763.01986433953\\
0.55	1746.35757338154\\
0.56	1729.69179612574\\
0.57	1713.02241381158\\
0.58	1696.34930229815\\
0.59	1679.67233176052\\
0.6	1662.99136636547\\
0.61	1646.30626392519\\
0.62	1629.61687552706\\
0.63	1612.92304513779\\
0.64	1596.22460917954\\
0.65	1579.52139607613\\
0.66	1562.81322576653\\
0.67	1546.09990918307\\
0.68	1529.38124769145\\
0.69	1512.65703248914\\
0.7	1495.92704395878\\
0.71	1479.19105097249\\
0.72	1462.44881014305\\
0.73	1445.70006501685\\
0.74	1428.94454520387\\
0.75	1412.18196543858\\
0.76	1395.41202456569\\
0.77	1378.63440444366\\
0.78	1361.84876875847\\
0.79	1345.05476173892\\
0.8	1328.2520067642\\
0.81	1311.4401048534\\
0.82	1294.61863302525\\
0.83	1277.78714251534\\
0.84	1260.94515683675\\
0.85	1244.09216966796\\
0.86	1227.22764255072\\
0.87	1210.35100237811\\
0.88	1193.46163865078\\
0.89	1176.55890047714\\
0.9	1159.64209328991\\
0.91	1142.7104752485\\
0.92	1125.76325329285\\
0.93	1108.79957881022\\
0.94	1091.8185428716\\
0.95	1074.81917098913\\
0.96	1057.80041733962\\
0.97	1040.76115839244\\
0.98	1023.70018587194\\
0.99	1006.61619897559\\
1	989.507795758491\\
};
\end{axis}
\end{tikzpicture}
\caption{Comparison of optimal gains and optimal function values for the second example} \label{opt gainsEx2}
\end{figure}
}
\end{example}

\medskip

\noindent\textbf{Acknowledgments}\\
Supported in part by the National Science Foundation under the project ``Control of Dynamical Systems'', Grant No.\ IP-2016-06-2468 and under the project ``Optimization of parameter dependent mechanical systems'' (IP-2014-09-9540), Grant No.\ 9540.

\bibliographystyle{plain}
\bibliography{bibliogr_opt_ab}

\end{document}